\documentclass[a4paper,10pt]{amsart}

\usepackage[english]{babel}

\usepackage{lmodern}

\usepackage[utf8]{inputenc}

\usepackage[babel=true]{csquotes}

\usepackage{comment}

\usepackage{amsfonts,amssymb,amsmath,eucal,pinlabel,array,hhline}

\usepackage{amsfonts}
\usepackage{amsthm} 
\usepackage{amsmath}
\usepackage{amssymb}

\DeclareMathOperator\artanh{artanh}
\DeclareMathOperator\arsinh{arsinh}

\usepackage{hyperref}

\theoremstyle{definition}
\newtheorem{defi}{Definition}[section] 

\theoremstyle{plain}
 
\newtheorem{prop}[defi]{Proposition}
\newtheorem{theo}[defi]{Theorem}
\newtheorem{coro}[defi]{Corollary}

\theoremstyle{remark}
\newtheorem{rema}[defi]{Remark}
\newtheorem{exem}[defi]{Example}
\newtheorem{question}[defi]{Question}

\newcommand{\Id}{\mathrm{Id}}

\newcommand{\C}{\mathbb{C}}
\newcommand{\R}{\mathbb{R}}

\newcommand{\Z}{\mathbb{Z}}
\newcommand{\N}{\mathbb{N}}
\newcommand{\F}{\mathbb{F}}
\newcommand{\NN}{\mathcal{N}}

\title[Fuglede--Kadison determinants over free groups \& Lehmer's constants]{Fuglede--Kadison determinants over free groups and Lehmer's constants}
\author{Fathi Ben Aribi}

\address{
	UCLouvain, IRMP, Chemin du Cyclotron 2 \\
	1348 Louvain-la-Neuve \\
	Belgium}
\email{fathi.benaribi@uclouvain.be}

\makeatletter
\@namedef{subjclassname@2020}{%
	\textup{2020} Mathematics Subject Classification}
\makeatother

\subjclass[2020]{57K10; 57M05; 20F36; 11R06; 47C15}
\keywords{$L^2$-invariants; braid groups; Fuglede-Kadison determinant; Lehmer's constants}

\begin{document}

\begin{abstract}Lehmer's famous problem asks whether the set of Mahler measures of polynomials with integer coefficients admits a gap at $1$. In 2019, Lück extended this question to Fuglede--Kadison determinants of a general group, and he defined the Lehmer's constants of the group to measure such a gap.

	In this paper, we compute new values for Fuglede-Kadison determinants over non-cyclic free groups, which yields
	the new upper bound $\frac{2}{\sqrt{3}}$ for Lehmer's constants of all torsion-free groups which have non-cyclic free subgroups.
	
	Our proofs use relations between Fuglede--Kadison determinants and random walks on Cayley graphs, as well as works of Bartholdi and Dasbach--Lalin.
	
	Furthermore, via the gluing formula for $L^2$-torsions, we show that
	the Lehmer's constants of an infinite number of fundamental groups of hyperbolic 3-manifolds are bounded above by even smaller values than $\frac{2}{\sqrt{3}}$.
\end{abstract}

\maketitle

\section*{Introduction}

The \textit{Mahler measure} $\mathcal{M}(P) \geqslant 0$ of a polynomial $P(X) \in \C[X]$, given as the  geometric mean of $P$ over the unit circle, is a widely studied object in number theory. In particular, the famous
\textit{Lehmer's problem} asks if $1$ is an accumulation point of Mahler measures of polynomials with integer coefficients. If, on the contrary, there should exist a gap around $1$, one popular candidate for the lowest such Mahler measure greater than $1$ is $\mathcal{M}(L)=1.176280818...$, the Mahler measure of Lehmer's polynomial 
$L(X)$.

Fuglede--Kadison determinants are analytic variants of the usual determinant for operators over (possibly infinite-dimensional) Hilbert spaces such as the completion $\ell^2(G)$ of a group algebra. Fuglede--Kadison determinants are technical to define and notoriously difficult to compute. The most common exact computations of a Fuglede--Kadison determinant are either $1$ or  of the form $\exp \left (\frac{\mathrm{vol}(M)}{6\pi}\right )$ where $\mathrm{vol}(M)$ is a hyperbolic volume (via Theorem  \ref{thm:LS}). In this paper, we expand this range of explicit computations in the case of free groups (see Theorem \ref{thm:intro:det}).

Certain operators over $\ell^2(\Z)$ can be identified with elements of the ring of Laurent polynomials $\C[\Z] \cong \C[X^{\pm 1}]$, and their Fuglede--Kadison determinants are then given by the appropriate Mahler measure. In 2019, this led Lück \cite{Lu2} to generalise Lehmer's problem to other groups than $G=\Z$ and study accumulation points of Fuglede--Kadison determinants. In particular, Lück defined the four \textit{Lehmer's constants} $\Lambda(G), \Lambda^w(G), \Lambda_1(G), \Lambda^w_1(G)$ of a group $G$, that are greater than $1$ exactly when there is a gap around $1$ for certain Fuglede--Kadison determinants with integer coefficients (see Section \ref{sub:lehmer:luck} for more details).

In \cite{Lu2}, Lück presented several open questions,  among which the following one:

\begin{question}[Question \ref{qu:lehmer:luck} (2)]\label{qu:intro:lehmer:luck} 
	For which torsion-free groups $G$  do we have
	$$  \Lambda(G) = \Lambda^w(G) = \Lambda_1(G)= \Lambda^w_1(G) = \mathcal{M}(L)=1.17628... \ ?$$
\end{question}

A partial negative answer to Question \ref{qu:intro:lehmer:luck} was given by Lück (see Example \ref{ex:weeks}) for two Lehmer's constants of  the fundamental group $G_{We}$ of the Weeks manifold; for this group, Lück proved that
$\Lambda(G_{We}) \leqslant \Lambda^w(G_{We}) < \mathcal{M}(L)=1.17628...$

In this paper, we go further and answer Question \ref{qu:intro:lehmer:luck} in the negative for all four Lehmer's constants over a large class of torsion-free groups (which includes the fundamental group  of the Weeks manifold).

Our first step, of independent interest, consists in computing new values for Fuglede--Kadison determinants of specific operators over free groups:

\begin{theo}[Theorem \ref{thm:detFd}]\label{thm:intro:det}
	Let $d \geqslant 3$. Let $x_1, \ldots, x_{d-1}$ be  $d-1$ generators of the free group $\F_{d-1}$. 
	Let $\zeta_1, \ldots,\zeta_{d-1} \in  \C$ such that $|\zeta_1|=\ldots=|\zeta_{d-1}|=1$.
	Then:
	$$
	\det{}_{\F_{d-1}}(\Id + \zeta_1 R_{x_1} + \ldots + \zeta_{d-1} R_{x_{d-1}}) = \dfrac{(d-1)^{\frac{d-1}{2}}}{d^{\frac{d-2}{2}}}.
	$$
\end{theo}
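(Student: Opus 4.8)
The plan is to peel off the unimodular coefficients by a gauge transformation, identify the resulting operator over $\F_{d-1}$ with the sum of $d$ $*$-free Haar unitaries inside $\mathcal{N}(\F_d)$, recognise the spectral measure of its modulus squared as that of the simple random walk on the $d$-regular tree, and then integrate the logarithm against the corresponding (classical) Kesten measure. First, since $\F_{d-1}$ is free, the rule $x_i\mapsto\zeta_i$ extends to a homomorphism $\chi\colon\F_{d-1}\to\{z\in\C:|z|=1\}$, and the diagonal unitary $U_\chi$ of $\ell^2(\F_{d-1})$ with $U_\chi\delta_g=\chi(g)\delta_g$ satisfies $U_\chi R_g U_\chi^*=\chi(g)R_g$; hence conjugation by $U_\chi$ turns $\Id+R_{x_1}+\dots+R_{x_{d-1}}$ into $\Id+\zeta_1R_{x_1}+\dots+\zeta_{d-1}R_{x_{d-1}}$. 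As the Fuglede--Kadison determinant is invariant under unitary conjugation (and independent of the chosen free basis, by its invariance under group automorphisms), it suffices to compute $D_d:=\det{}_{\F_{d-1}}(\Id+R_{x_1}+\dots+R_{x_{d-1}})$.

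Next I would move the computation into $\F_d=\langle y_1,\dots,y_d\rangle$. The elements $x_i:=y_iy_d^{-1}$, $1\le i\le d-1$, form a free basis of a subgroup $H\cong\F_{d-1}$ (seen by killing $y_d$), and the relation $R_aR_b=R_{ba}$ gives $R_{y_d}\bigl(\Id+R_{x_1}+\dots+R_{x_{d-1}}\bigr)=R_{y_1}+\dots+R_{y_d}=:T$. Since $R_{y_d}$ is unitary, multiplicativity of the Fuglede--Kadison determinant together with its compatibility with the inclusion $H\le\F_d$ yields $D_d=\det{}_{\F_d}(T)$ with $T=R_{y_1}+\dots+R_{y_d}$, i.e.\ $D_d$ is the Fuglede--Kadison determinant of a sum of $d$ $*$-free Haar unitaries (so, incidentally, of an $R$-diagonal element). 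Thus $\log D_d=\tfrac12\int_{0}^{\infty}\log\lambda\;d\mu(\lambda)$, where $\mu$ is the spectral measure of the positive operator $T^*T$ for the canonical trace $\tau$ --- provided $0$ is not an atom of $\mu$ and the integral converges, both of which will follow once $\mu$ is known.

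The heart of the argument is the identification of $\mu$. One computes $T^*T=d\,\Id+\sum_{i\neq j}R_{y_jy_i^{-1}}$, and every element $y_jy_i^{-1}$ lies in $H\cong\F_{d-1}$. Let now $S:=a_1+\dots+a_d\in\mathcal{N}\bigl((\Z/2\Z)^{*d}\bigr)$, where $a_1,\dots,a_d$ are the canonical involutions, so that $S^2=d\,\Id+\sum_{i\neq j}a_ja_i$. The assignments $y_iy_d^{-1}\mapsto a_ia_d$ extend to a trace-preserving isomorphism between the von Neumann subalgebras generated respectively by $\{y_jy_i^{-1}\}_{i\neq j}$ and by $\{a_ja_i\}_{i\neq j}$ --- both are copies of $\mathcal{N}(\F_{d-1})$ carrying matching free bases and relations --- under which $T^*T$ goes to $S^2$. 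Hence $\tau\bigl((T^*T)^n\bigr)=\tau(S^{2n})$ for all $n\ge 0$, so $\mu$ is the push-forward under $t\mapsto t^2$ of the spectral measure $\rho$ of $S$. But $\rho$ is precisely the spectral measure of the simple random walk on the $d$-regular tree --- the Cayley graph of $(\Z/2\Z)^{*d}$ for the generators $a_i$ --- which is where the theory of random walks on Cayley graphs and the works of Bartholdi enter: by Kesten's classical computation, $\rho$ is absolutely continuous, supported on $[-2\sqrt{d-1},\,2\sqrt{d-1}]$, with Cauchy transform
\[
G_\rho(z)=\int\frac{d\rho(t)}{z-t}=\frac{2(d-1)}{d\sqrt{z^2-4(d-1)}+(d-2)z}.
\]
In particular $\rho$, hence $\mu$, has no atom at $0$ and the density of $\mu$ grows only like $\lambda^{-1/2}$ near $0$; so $T$ is of determinant class and
\[
\log D_d=\tfrac12\int_{0}^{4(d-1)}\log\lambda\;d\mu(\lambda)=\int_{-2\sqrt{d-1}}^{2\sqrt{d-1}}\log|t|\;d\rho(t).
\]

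It remains to evaluate this logarithmic potential of $\rho$ at the origin. I would put $J(z):=\int\log(z-t)\,d\rho(t)$, so that $J'=G_\rho$ and $J(z)=\log z+O(z^{-2})$ as $z\to\infty$, and then substitute $w=zG_\rho(z)$: the algebraic equation $G_\rho^2(z^2-d^2)+(d-2)zG_\rho-(d-1)=0$ becomes $z=dw/\sqrt{(w+d-1)(w-1)}$, and $G_\rho\,dz$ transforms into $\bigl(\tfrac{d-1}{2(w+d-1)}-\tfrac{1}{2(w-1)}\bigr)dw$. Integrating, fixing the constant of integration from the behaviour as $z\to\infty$ (where $w\to1$), and then letting $z\to0$ through the upper half-plane (where $G_\rho(0)$ is finite, so $w\to0$), one obtains
\[
\log D_d=\Re\,J(0^+)=\frac{d-1}{2}\log(d-1)-\frac{d-2}{2}\log d,
\]
that is $D_d=(d-1)^{(d-1)/2}/d^{(d-2)/2}$, as claimed; alternatively this last evaluation can be quoted directly from Dasbach--Lalin's computation of the $\F_{d-1}$-Mahler measure of $1+x_1+\dots+x_{d-1}$. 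I expect the main obstacle to be the third step: realising $T^*T$ as the random-walk operator $S^2$ on the $d$-regular tree in a trace-preserving way (so that its spectrum is governed by the explicit Kesten measure), and confirming that $0$ carries no spectral mass --- this is exactly what makes the logarithmic integral finite and the determinant formula free of correction terms. By comparison, the gauge reduction and the passage to $\F_d$ are formal, and the concluding integral, once $G_\rho$ is in hand, is elementary after the rationalising substitution.
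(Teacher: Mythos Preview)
Your argument is correct and reaches the same value, but it follows a genuinely different path from the paper's proof. The paper handles the unimodular coefficients $\zeta_i$ by a direct combinatorial pairing argument inside the trace (Proposition~\ref{prop:series:free}), then feeds the generating series $u_{A^*A}(t)=\frac{2d-2}{d-2+d\sqrt{1-4(d-1)t}}$ into the $\varepsilon$-regularised integral formula of Proposition~\ref{prop:det_traces} and evaluates that integral by finding an explicit (computer-assisted) antiderivative and taking $\varepsilon\to 0^+$. You instead remove the $\zeta_i$ by a character twist, pass to $\F_d$ so that the operator becomes a sum of $d$ free Haar unitaries, identify the spectral measure of $T^*T$ with the square of the Kesten measure on the $d$-regular tree via the even subgroup of $(\Z/2\Z)^{*d}$, and evaluate $\int\log|t|\,d\rho(t)$ by the rationalising substitution $w=zG_\rho(z)$. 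Your route is more structural --- it explains conceptually why the answer is independent of the $\zeta_i$ and ties the computation to free probability --- and it replaces the somewhat opaque antiderivative by a short contour-type calculation; the paper's route, on the other hand, develops Proposition~\ref{prop:det_traces} as a general tool that is reused later for upper approximations (Section~\ref{sec:approx}), so its machinery is not wasted.

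One small point to tighten: the diagonal unitary $U_\chi$ does \emph{not} lie in $\mathcal{N}(\F_{d-1})$ (it fails to commute with the left regular representation unless $\chi$ is trivial), so ``invariance under unitary conjugation'' is not quite the right justification. What you actually use is that $R_g\mapsto\chi(g)R_g$ extends to a trace-preserving $*$-automorphism of $\mathcal{N}(\F_{d-1})$, and the Fuglede--Kadison determinant is invariant under such automorphisms; equivalently, all moments $\tau\bigl((A^*A)^k\bigr)$ are unchanged. With that phrasing the step is unimpeachable.
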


Our main tools to prove Theorem \ref{thm:intro:det} are relations between Fuglede-Kadison determinants  and combinatorics on Cayley graphs of free groups (specifically works of Bartholdi and Dasbach-Lalin  \cite{Ba, DL}).
Connections between Fuglede-Kadison determinants and random walks on Cayley graphs were surveyed and studied in \cite{KW}.

As a consequence of Theorem \ref{thm:intro:det}, we find a new upper bound $\dfrac{2}{\sqrt{3}}$ for Lehmer's constants over a large class of torsion-free groups:

\begin{theo}[Corollary \ref{cor:lehmer}]\label{cor:intro:free}
	Any torsion-free group $G$ that contains a subgroup $\F_2$ (such as the fundamental group of a hyperbolic $3$-manifold)  satisfies
	$$\Lambda(G), \Lambda_1(G), \Lambda^w(G), \Lambda^w_1(G) \in \left [1, \dfrac{2}{\sqrt{3}}\right ].$$
\end{theo}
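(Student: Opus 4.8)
The plan is to derive Corollary~\ref{cor:intro:free} from Theorem~\ref{thm:intro:det} (only the case $d=3$ is needed) together with two standard properties of Lück's Lehmer constants from \cite{Lu2}. The first is the trivial lower bound: each of $\Lambda,\Lambda^{w},\Lambda_{1},\Lambda^{w}_{1}$ is, by definition, an infimum of Fuglede--Kadison determinants that exceed $1$, hence is itself $\geqslant 1$; this accounts for the left endpoint of the interval. The second is monotonicity under subgroups, $H\leqslant G \Rightarrow \Lambda(G)\leqslant\Lambda(H)$, and likewise for the three other constants. Its mechanism is that a matrix $A$ over $\Z[H]$ induces up to a matrix $i_{*}A$ over $\Z[G]$ with $\det{}_{G}(i_{*}A)=\det{}_{H}(A)$ and $\dim_{\mathcal{N}(G)}\ker(i_{*}A)=\dim_{\mathcal{N}(H)}\ker(A)$, so any admissible competitor for $\Lambda(H)$ yields an admissible competitor for $\Lambda(G)$ with the same value. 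Given these, the corollary reduces to showing that all four Lehmer constants of $\F_{2}$ are at most $\tfrac{2}{\sqrt{3}}$.

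For that, I would apply Theorem~\ref{thm:intro:det} with $d=3$, so $\F_{d-1}=\F_{2}=\langle x_{1},x_{2}\rangle$, and with $\zeta_{1}=\zeta_{2}=1$, obtaining
$$\det{}_{\F_{2}}(\Id+R_{x_{1}}+R_{x_{2}})=\dfrac{(3-1)^{(3-1)/2}}{3^{(3-2)/2}}=\dfrac{2}{\sqrt{3}}=1.1547\ldots ,$$
which is $>1$ and, pleasingly, strictly smaller than $\mathcal{M}(L)=1.17628\ldots$. I then set $A:=\Id+R_{x_{1}}+R_{x_{2}}$, the operator attached to the single element $1+x_{1}+x_{2}\in\Z[\F_{2}]$, and check that $A$ is an admissible competitor in the infima defining all four constants: it is a $1\times 1$ matrix over $\Z[\F_{2}]$ (covering $\Lambda_{1}$ and $\Lambda^{w}_{1}$), and it is injective on $\ell^{2}(\F_{2})$ (covering the non-weak $\Lambda$ and $\Lambda^{w}$). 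Injectivity is the one place torsion-freeness is used: the strong Atiyah conjecture for $\F_{2}$ makes $\dim_{\mathcal{N}(\F_{2})}\ker(A)$ a non-negative integer, it is $<1$ since $A\neq 0$, hence it is $0$. As $\det{}_{\F_{2}}(A)=\tfrac{2}{\sqrt{3}}>1$, the element $A$ witnesses $\Lambda(\F_{2}),\Lambda^{w}(\F_{2}),\Lambda_{1}(\F_{2}),\Lambda^{w}_{1}(\F_{2})\leqslant\tfrac{2}{\sqrt{3}}$ all at once.

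Combining the two steps, any torsion-free $G$ containing $\F_{2}$ satisfies $\Lambda(G)\leqslant\Lambda(\F_{2})\leqslant\tfrac{2}{\sqrt{3}}$ and the three analogous inequalities, which with the lower bound $1$ gives the claimed membership in $\bigl[1,\tfrac{2}{\sqrt{3}}\bigr]$. For the parenthetical example, I would note that the fundamental group of a hyperbolic $3$-manifold acts freely and properly discontinuously on $\mathbb{H}^{3}$, hence is torsion-free, and being non-elementary it contains a non-abelian free subgroup. I do not foresee any real obstacle: the analytic content is entirely in Theorem~\ref{thm:intro:det}, and the only care required is to match $A$ against the exact admissibility conditions in each of the four definitions (especially injectivity for the non-weak ones) and to invoke subgroup monotonicity in the correct direction.
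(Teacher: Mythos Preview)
Your proof is correct and essentially identical to the paper's: compute $\det_{\F_2}(\Id+R_{x_1}+R_{x_2})=2/\sqrt{3}$ via Theorem~\ref{thm:intro:det} with $d=3$, observe that this operator is an injective $1\times 1$ matrix over $\Z[\F_2]$ with determinant $>1$ and hence is an admissible competitor for all four infima, then pass to $G\supset\F_2$ by subgroup monotonicity (Proposition~\ref{prop:detFK:properties}(3) / Remark~\ref{rem:lehmer:subgroup}). One minor terminological slip: in the paper's conventions it is the $^{w}$-variants $\Lambda^{w}$ and $\Lambda^{w}_{1}$ that \emph{require} injectivity (they are infima over smaller sets, hence $\Lambda\leqslant\Lambda^{w}$), so your parenthetical ``covering the non-weak $\Lambda$ and $\Lambda^{w}$'' has the roles reversed---but this does not affect the argument, since your $A$ meets the admissibility conditions for all four constants simultaneously.
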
	

Since $\dfrac{2}{\sqrt{3}}=1.1547... < 1.176... = \mathcal{M}(L)$, Theorem \ref{cor:intro:free} answers Question \ref{qu:intro:lehmer:luck} in the negative for the large class of torsion-free groups with non-cyclic free subgroups. In particular, this class includes the
fundamental group $G_{We}$ of the Weeks manifold, for which all four Lehmer's constants are consequently smaller than $\mathcal{M}(L)$.

In Section \ref{sec:hyp} we go further and find upper bounds smaller than $\frac{2}{\sqrt{3}}$ for an infinite subclass of the class of torsion-free groups with non-cyclic free subgroups. This sub-class includes the fundamental groups of hyperbolic 3-manifolds with volume smaller than $6\pi\ln\left (\frac{2}{\sqrt{3}}\right )$, and notably once again the group $G_{We}$.

\begin{theo}[Theorem \ref{thm:dehn:filling} and Corollary \ref{cor:dehn:filling}]\label{thm:intro:hyp}
	There exists an infinite family of hyperbolic 3-manifolds with finite volume such that for each such manifold $M$, and each group $G$ admitting $\pi_1(M)$ as a subgroup,
	$$ \Lambda_1^w(G) \leqslant \Lambda_1^w(\pi_1(M)) \leqslant \exp \left (\frac{\mathrm{vol}(M)}{6\pi}\right ) < \frac{2}{\sqrt{3}}.$$
\end{theo}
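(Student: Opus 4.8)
The plan is to combine Theorem~\ref{thm:intro:hyp}'s two inputs: a supply of hyperbolic $3$-manifolds of arbitrarily small volume, and the behaviour of Lehmer's constants under taking subgroups plus the identification of certain Fuglede--Kadison determinants with $L^2$-torsions. First I would recall that for a finite-volume hyperbolic $3$-manifold $M$, one has (Theorem~\ref{thm:LS}, the L\"uck--Schick type statement referenced in the introduction) an operator whose Fuglede--Kadison determinant over $\pi_1(M)$ equals $\exp\!\left(\frac{\mathrm{vol}(M)}{6\pi}\right)$; since this determinant arises from a matrix with integer coefficients (the combinatorial Laplacian of a triangulation / the Fox Jacobian of a presentation), its value is an admissible quantity in the definition of $\Lambda_1^w$. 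Hence $\Lambda_1^w(\pi_1(M)) \leqslant \exp\!\left(\frac{\mathrm{vol}(M)}{6\pi}\right)$ whenever the right-hand side is $>1$, which it always is. Then the monotonicity of $\Lambda_1^w$ under subgroups (part of L\"uck's framework, and the same inequality already used to prove Corollary~\ref{cor:intro:free}) gives $\Lambda_1^w(G) \leqslant \Lambda_1^w(\pi_1(M))$ for any $G$ containing $\pi_1(M)$.

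Next I would produce the infinite family. The natural device is hyperbolic Dehn filling (Thurston): starting from a cusped hyperbolic $3$-manifold $N$ (for instance the Whitehead link complement or the figure-eight knot complement), all but finitely many Dehn fillings $N(p,q)$ are hyperbolic, and their volumes converge from below to $\mathrm{vol}(N)$; more to the point, $\mathrm{vol}(N(p,q)) \to \mathrm{vol}(N)$ but there are infinitely many fillings whose volume lies in any prescribed interval $(v_0, \mathrm{vol}(N))$. So, choosing $v_0 < 6\pi\ln\!\left(\frac{2}{\sqrt 3}\right)$ --- which is possible as soon as the starting cusped manifold has small enough volume, e.g.\ the figure-eight knot complement with volume $\approx 2.0299 < 6\pi\ln(2/\sqrt3)\approx 2.697$ --- one obtains infinitely many distinct closed hyperbolic manifolds $M = N(p,q)$ with $v_0 < \mathrm{vol}(M) < 6\pi\ln\!\left(\frac{2}{\sqrt3}\right)$, hence $\exp\!\left(\frac{\mathrm{vol}(M)}{6\pi}\right) < \frac{2}{\sqrt3}$. (The phrasing ``gluing formula for $L^2$-torsions'' in the abstract suggests the authors instead track how the $L^2$-torsion changes under filling directly, but the volume-convergence statement suffices for the inequality as stated.) Distinctness of infinitely many of the $M$ follows from the volumes being distinct, or from Mostow rigidity together with the fact that only finitely many manifolds share a given volume.

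Assembling: for each such $M$ in the family and each group $G$ with $\pi_1(M) \leqslant G$,
\[
\Lambda_1^w(G) \;\leqslant\; \Lambda_1^w(\pi_1(M)) \;\leqslant\; \exp\!\left(\frac{\mathrm{vol}(M)}{6\pi}\right) \;<\; \frac{2}{\sqrt3},
\]
which is exactly the claim; that the family is infinite is what the Dehn-filling step provides. One should double-check that $G_{We}$, the Weeks manifold group, can be realised in this family (the Weeks manifold is itself a Dehn filling on the Whitehead link complement, with volume $\approx 0.9427$, comfortably below the threshold).

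The main obstacle I anticipate is bookkeeping around the two ``soft'' inputs rather than any hard estimate: first, verifying carefully that the operator realising $\exp(\mathrm{vol}(M)/6\pi)$ as its Fuglede--Kadison determinant is of the integral type that the \emph{combinatorial} Lehmer constant $\Lambda_1^w$ is defined to see (not just the analytic one), so that the inequality $\Lambda_1^w(\pi_1(M)) \leqslant \exp(\mathrm{vol}(M)/6\pi)$ is legitimate --- this is presumably where the ``gluing formula for $L^2$-torsions'' does real work, guaranteeing the determinant class is detected with $\Z$-coefficients; and second, making the Dehn-filling count genuinely infinite and explicit, i.e.\ exhibiting an honest sequence $(p_n,q_n)$ with all volumes in the target window and pairwise distinct manifolds, rather than merely invoking ``all but finitely many fillings are hyperbolic.'' Neither step is deep, but both require care to state cleanly.
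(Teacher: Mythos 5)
Your outline gets the easy parts right (subgroup monotonicity of $\Lambda_1^w$ via Remark \ref{rem:lehmer:subgroup}, and producing infinitely many hyperbolic manifolds of volume below $6\pi\ln(2/\sqrt{3})$ by Dehn filling a small cusped manifold), but the central step is asserted rather than proved, and the concern you flag at the end is misdiagnosed. The constant $\Lambda_1^w(\pi_1(M))$ is an infimum over determinants of \emph{single injective elements} $A\in R_{\Z\pi_1(M)}$, i.e.\ $1\times 1$ operators. The L\"uck--Schick theorem only tells you that the $L^2$-torsion equals $\exp(\mathrm{vol}(M)/6\pi)$, and the $L^2$-torsion is a priori an \emph{alternating product} of Fuglede--Kadison determinants of matrices of various sizes (the boundary maps of a chain complex). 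That is exactly why L\"uck's own argument (Proposition \ref{prop:hyp:luck}) only bounds $\Lambda(\pi_1(M))$ and $\Lambda^w(\pi_1(M))$ --- the matrix versions --- and not $\Lambda_1^w$. Saying ``the combinatorial Laplacian / Fox Jacobian has integer coefficients'' does not close this gap: the issue is not integrality but reducing the whole torsion to the determinant of a \emph{single} size-one operator. The paper's Theorem \ref{thm:dehn:filling} does precisely this: for a cusped manifold whose group has a two-generator one-relator presentation, \cite[Theorem 4.9]{Lu} expresses $T^{(2)}(M)$ as $\det_G(A)$ for one $A\in R_{\Z G}$; and for closed manifolds obtained by Dehn filling, the surgery formula (Theorem \ref{thm:BA:surgery}) pushes that single operator forward along the quotient maps $Q_k$, after one checks that the core of each filling solid torus has infinite order in the filled group (using irreducibility and torsion-freeness). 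Your parenthetical claim that ``the volume-convergence statement suffices for the inequality as stated'' and that the gluing formula is dispensable is therefore wrong: volume convergence only controls the right-hand side $\exp(\mathrm{vol}(M)/6\pi)<2/\sqrt{3}$, not the inequality $\Lambda_1^w(\pi_1(M))\leqslant\exp(\mathrm{vol}(M)/6\pi)$, which is where the gluing formula does all the work.

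A secondary point: your infinite family should be restricted to fillings of a cusped manifold whose group is two-generator one-relator (the paper uses the Whitehead link exterior), since the argument above needs that hypothesis on the unfilled manifold; an arbitrary small-volume hyperbolic manifold is not known to qualify, which is why the paper is careful to state hypotheses (1) and (2) in Theorem \ref{thm:dehn:filling} rather than claiming the bound for all small-volume manifolds.
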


To prove Theorem \ref{thm:intro:hyp}, we prove that the  $L^2$-torsions of the considered manifolds can be expressed as a single Fuglede--Kadison determinant of an operator of size $1$, via the specific example of the Whitehead link and the formula for Dehn surgery for $L^2$-torsions (see Theorem \ref{thm:BA:surgery}).

Finally, in Section \ref{sec:approx}, we present a combinatorial method for computing upper bounds for Fuglede--Kadison determinants over groups with solvable word problem, and we illustrate it for the group of the figure-eight knot.

The article is organised as follows: in Section \ref{sec:prelim} we recall preliminaries on Fuglede--Kadison determinants and Lehmer's constants; 
in Section \ref{sec:detFK} we compute new values of Fuglede--Kadison determinants over free groups via combinatorial group theory; 
in Section \ref{sec:hyp} we review Lehmer's constants for groups of hyperbolic 3-manifolds;
finally, in Section  \ref{sec:approx}, we study upper approximations of Fuglede--Kadison determinants.

In the proof of Theorem \ref{thm:detFd}, several antiderivatives were found using
\textit{Wolfram Alpha} \cite{W}, on a computer equipped with a 
Intel® Core™ i5-8500 CPU @ 3.00GHz × 6 processor.

\section{Preliminaries}\label{sec:prelim}

In this section, we will set some notation and recall some fundamental properties.
We will mostly follow the conventions of \cite{BAC} and \cite{Lu}.

\subsection{Mahler measure of a polynomial}

Let $P\in \C[X_1,\ldots,X_d]$ denote a $d$-variable polynomial. Then its \textit{Mahler measure} $\mathcal{M}(P)$ is the nonnegative real number
$$
\mathcal{M}(P) := \exp\left (\dfrac{1}{(2 \pi)^d} \int_0^{2\pi} \ldots \int_0^{2\pi} \ln\left (\left |P(e^{i \theta_1}, \ldots, e^{i \theta_d})\right |\right )d\theta_1 \ldots d\theta_d \right ) \in \R_{\geqslant 0}.$$
Remark that this definition immediately extends to $d$-variable \textit{Laurent} polynomials $P\in \C\left [X_1^{\pm 1},\ldots,X_d^{\pm 1}\right ]$. This will be useful in Section \ref{sub:fkdet}, where the group algebra $\C[\Z^d]$ will be naturally identified with the algebra of Laurent polynomials $\C\left [X_1^{\pm 1},\ldots,X_d^{\pm 1}\right ]$. We refer to \cite{Bo} (among others) for a survey on Mahler measures.

\begin{exem}
	For $d=1$, and $P(X) = C \cdot X^{-l} \cdot \prod_{j=1}^r (X - \alpha_j) \in \C[X^{\pm 1}]$ (where $C, \alpha_1, \ldots,\alpha_r \in \C$ and $l \in \N$), there is a closed formula
	$$\mathcal{M}(P)= |C| \cdot \prod_{j=1}^r \max\{1,|\alpha_j|\}.$$
\end{exem}

When $P$ has two or more variables, the Mahler measure is known for several classes of examples, such as the following one:

\begin{exem}[\cite{Bo}, Section 4]\label{ex:mahler:boyd}
	The two-variable polynomial $1+X+Y \in \C[X,Y]$ has Mahler measure 
	$\mathcal{M}(1+X+Y) = e^{
		\frac{1}{\pi}\Im \mathrm{Li_2}\left (e^{i \pi/3}\right )
	}
	= 1.38135...$
\end{exem}

\begin{exem}
	\textit{Lehmer's polynomial} $L(X) \in \Z[X]$ is defined as
	$$L(X) := X^{10}+ X^9- X^7- X^6- X^5- X^4- X^3+ X+1.$$
	Its Mahler measure is equal to
	$\mathcal{M}(L)=1.176280818...$
\end{exem}

\begin{question}[Lehmer's problem]\label{qu:lehmer} \ 
	
	(1) 	Does there exist a constant $\Lambda >1$ such that for all Laurent polynomials with integer coefficients $P(X) \in \Z[X, X^{-1}]$ with $\mathcal{M}(P)>1$, we have
	$$\mathcal{M}(P)>\Lambda?$$
	
	(2) If the answer to (1) is yes, is the constant $\Lambda$ equal to the Mahler measure of Lehmer's polynomial, i.e.
	$$\Lambda = \mathcal{M}(L)=1.176280818... ?$$
\end{question}

\subsection{Fuglede-Kadison determinant}\label{sub:fkdet}

In this section we will give  short definitions of the von Neumann trace and the Fuglede-Kadison determinant.
More details can be found in \cite{Lu} and \cite{BAC}.

Let $G$ be a finitely generated group.
The Hilbert space $\ell^2(G)$ is the completion of the group algebra $\C G$, and the space of bounded operators on it is denoted $B(\ell^2(G))$. We will focus on \textit{right-multiplication operators} $R_w \in B(\ell^2(G))$, where $R_{\cdot}$ denotes the right regular action of $G$ on $\ell^2(G)$ extended to the group ring $\C G$ (and further extended to the rings of matrices $M_{p,q}(\C G)$).

For any element $w = a_0 \cdot 1_G + a_{1} g_1 \ldots + a_{r} g_r \in \C G$, the \textit{von Neumann trace} $\mathrm{tr}_G$ of the associated right multiplication operator is defined as
$$\mathrm{tr}_G(R_w) = \mathrm{tr}_G\left (
a_0 \Id_{\ell^2(G)} + a_{1} R_{g_1} \ldots + a_{r} R_{g_r} \right ) := a_0,$$
and the von Neumann trace for a finite square matrix over $\C G$ is given as the sum of the traces of the diagonal coefficients.

Now the most concise definition of the \textit{Fuglede-Kadison determinant} $\det_G(A)$ of a  right-multiplication operator $A$ is probably 
$$
\det{}_G(A) := 
\lim_{\varepsilon \to 0^+}
\left (\exp \circ \left (\dfrac{1}{2} \mathrm{tr}_G\right ) \circ  \ln \right )
\left (
(A_\perp)^*(A_\perp) + \varepsilon \Id \right )  \ \geqslant 0,$$
where $A_\perp$ is the restriction of $A$ to an orthogonal complement of the kernel, $*$ is the adjunction and $\ln$ the logarithm of an operator in the sense of the holomorphic functional calculus. Compare with \cite[Theorem 3.14]{Lu} and Proposition \ref{prop:detFK:properties} below. We call the operator $A$ \textit{of determinant class} if $\det_G(A) \neq 0$.

The following properties concern the classical Fuglede-Kadison determinant $\det_G$ described in \cite[Chapter 3]{Lu}, which is not always multiplicative  if one deals with non injective operators. Moreover, this determinant forgets about the influence of the spectral value $0$, which surprisingly makes it take the value $1$ for the zero operator. More recent articles have used the \textit{regular Fuglede-Kadison determinant} $\det^r_G$ instead, which is defined for square injective operators, is zero for non injective operators, and is always multiplicative. In this paper we will work with both types of determinants, but the reader should be reassured that most of the statements we will make remain unchanged while replacing one determinant with the other (up to assumptions on injectivity usually). Similarly, the statements of the following Proposition \ref{prop:detFK:properties} admit immediate variants with $\det^r_G$. All statements of Proposition \ref{prop:detFK:properties} follow from \cite[Chapter 3]{Lu}, except for (6), which directly follows from the others.

\begin{prop}[\cite{Lu}]\label{prop:detFK:properties}
	Let~$G$ be a countable discrete group and let $$A,B,C,D \in \sqcup_{p,q \in \N} R_{M_{p,q}(\C G)}$$ be general right multiplication operators. The Fuglede-Kadison determinant satisfies the following properties:
	\begin{enumerate}
		\item (multiplicativity) If $A, B$ are injective, square  and of the same size, then 
		$$\det{}_G(A \circ B) = \det{}_G(A) \det{}_G(B).$$
		\item (block triangular case) If $A, B$ are injective and square, then
		$$\det{}_G
		\begin{pmatrix}
		A & C \\ 0& B
		\end{pmatrix}
		= \det{}_G(A) \det{}_G(B),$$
		where $C$ has the appropriate dimensions.
		\item  (induction) If $\iota\colon G \hookrightarrow H$ is a group monomorphism, then 
		$$\det{}_H (\iota (A)) 
		= \det{}_G(A).$$
		\item  (relation with the von Neumann trace) If $A$ is a positive operator, then
		$$\det{}_G(A) = \left (\exp \circ \mathrm{tr}{}_G \circ \ln\right )(A).$$
		\item (simple case) If~$g \in G$ is of infinite order, then for all~$t \in \C$ the operator $ \Id - t R_g$ is injective and~$$
		\mathrm{det}_{G} ( \Id - t R_g) = \max ( 1 , |t|).$$
		\item ($2\times 2$ trick) For all $A,B,C,D \in N(G)$ such
		that $B$ is invertible, 
		$\begin{pmatrix}
		A & B \\ C& D
		\end{pmatrix}$ is injective if and only if 
		$D B^{-1} A - C$ is injective, and in this case
		one has:
		$$\det{}_G
		\begin{pmatrix}
		A & B \\ C& D
		\end{pmatrix}
		= \det{}_G(B) \det{}_G(D B^{-1} A - C).$$
		\item (relation with Mahler measure) Let $G= \Z^d$, and $P\in \C[X_1^{\pm 1},\ldots,X_d^{\pm 1}]$ denote the  Laurent polynomial associated to the operator $A \in R_{\C \Z^d}$. Then
		$$
		\mathrm{det}_{\Z^d} (A) = \mathcal{M}(P) = \exp\left (\dfrac{1}{(2 \pi)^d} \int_0^{2\pi} \ldots \int_0^{2\pi} \ln\left (\left |P(e^{i \theta_1}, \ldots, e^{i \theta_d})\right |\right )d\theta_1 \ldots d\theta_d \right ),$$
		where $\mathcal{M}$ is the Mahler measure.
		\item (limit of positive operators) If $A$ is injective, then
		$$\det{}_{G}(A) = \lim_{\varepsilon \to 0^+}
		\sqrt{\det{}_{G}(A^* A + \varepsilon \Id)}.$$
		\item (dilations) Let $\lambda \in \C^*$. Then:
		$$\det{}_G\left (\lambda \ \Id^{\oplus n}\right ) = |\lambda|^n.$$
	\end{enumerate}
\end{prop}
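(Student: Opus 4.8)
The plan is to split the nine properties into two groups. Properties (1)--(5) and (7)--(9) are all established in Lück's book \cite[Chapter 3]{Lu}, so for these I would only verify that our conventions---the definition via the holomorphic functional calculus recalled above, together with the normalisation $\mathrm{tr}_G(\Id)=1$---match Lück's, and then cite the corresponding statements. Conceptually: (4) and (8) are unwindings of the definition for positive, respectively injective, operators; (9) follows from (4) and (8) together with $\mathrm{tr}_G(\Id^{\oplus n})=n$; (7) is the Fourier identification $\ell^2(\Z^d)\cong L^2(T^d)$ under which the von Neumann trace becomes integration over the torus and $\det{}_{\Z^d}$ becomes the Mahler measure; (5) is then the one-variable case of (7) for the polynomial $1-tX$, transported to $G$ by the induction property (3) applied to $\langle g\rangle\cong\Z\hookrightarrow G$ (injectivity of $\Id-tR_g$ comes from $1-te^{i\theta}\neq 0$ almost everywhere); and (1),(2),(3) are the foundational multiplicativity, block-triangularity and induction statements of the theory.

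The only genuinely new content is to derive the $2\times 2$ trick (6) from the others, over the group von Neumann algebra $\mathcal{N}(G)$. First I would record two elementary determinant values the argument needs. The block swap $S:=\begin{pmatrix}0&\Id\\\Id&0\end{pmatrix}$ is unitary and injective, so by (8) and (9), $\det{}_G(S)=\lim_{\varepsilon\to 0^+}\sqrt{\det{}_G((1+\varepsilon)\Id^{\oplus 2})}=1$; and for any injective $X$, property (1) together with (9) gives $\det{}_G(-X)=\det{}_G(-\Id)\det{}_G(X)=\det{}_G(X)$. Next comes the key algebraic identity: with $B$ invertible one has the factorisation
$$\begin{pmatrix}A&B\\C&D\end{pmatrix}S=\begin{pmatrix}B&A\\D&C\end{pmatrix}=\begin{pmatrix}\Id&0\\DB^{-1}&\Id\end{pmatrix}\begin{pmatrix}B&A\\0&C-DB^{-1}A\end{pmatrix}=:L\,U.$$
The unipotent factor $L$ is conjugate to an upper-unipotent matrix, $SLS=\begin{pmatrix}\Id&DB^{-1}\\0&\Id\end{pmatrix}$, so by multiplicativity (1), the value $\det{}_G(S)=1$, and the upper block-triangular case (2), I get $\det{}_G(L)=\det{}_G(SLS)=1$.

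With these in hand both halves of (6) follow. Since $S$ and $L$ are bijective, $M:=\begin{pmatrix}A&B\\C&D\end{pmatrix}$ is injective iff $MS=LU$ is injective iff $U$ is injective; and because $B$ is invertible, a direct kernel computation on the upper-triangular $U$ shows $U$ is injective iff $C-DB^{-1}A$ is injective, which is equivalent to injectivity of $DB^{-1}A-C$. Assuming injectivity, multiplicativity (1) gives $\det{}_G(M)=\det{}_G(M)\det{}_G(S)=\det{}_G(LU)=\det{}_G(L)\det{}_G(U)=\det{}_G(U)$, and the block-triangular case (2) gives $\det{}_G(U)=\det{}_G(B)\det{}_G(C-DB^{-1}A)=\det{}_G(B)\det{}_G(DB^{-1}A-C)$, as claimed.

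The main obstacle I anticipate is bookkeeping rather than depth: I must check that every operator to which I apply multiplicativity (1) is genuinely injective and square of the correct size---in particular that $U=L^{-1}MS$ inherits injectivity from $M$---and that the classical determinant's blindness to the spectral value $0$ does not spoil the argument. This is precisely why I route the values $\det{}_G(S)$ and $\det{}_G(-\Id)$ through (8) and (9) rather than manipulating potentially non-injective raw operators. The corresponding statements for the regular determinant $\det^r_G$ would follow from the same factorisations, using its unconditional multiplicativity in place of (1).
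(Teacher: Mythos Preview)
Your proposal is correct and matches the paper's approach exactly: the paper simply states that properties (1)--(5) and (7)--(9) follow from \cite[Chapter~3]{Lu} and that (6) ``directly follows from the others,'' without spelling out the derivation. Your factorisation $MS=LU$ together with the computations $\det{}_G(S)=\det{}_G(L)=1$ is a clean and fully justified instantiation of that one-line remark, so there is nothing to add or correct.
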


\begin{rema}\label{rem:atiyah}
	If the group $G$ satisfies the \textit{strong Atiyah conjecture} (see \cite[Chapter 10]{Lu}), then the right multiplication operator by any non-zero element of $\C G$ is injective, which makes it convenient to apply some parts of Proposition \ref{prop:detFK:properties}. Note that free groups and free abelian groups satisfy the strong Atiyah conjecture.
\end{rema}

\subsection{Lehmer's constants}\label{sub:lehmer:luck}

Given a group $G$, its  \textit{Lehmer's constants}, as defined by Lück in \cite{Lu2}, are:
\begin{itemize}
	\item $\Lambda(G):=\inf\left \{ \det_G(A) \ | \ A \in \sqcup_{p,q \in \N} R_{M_{p,q}(\Z G)}, \ \det_G(A)>1 \right \}$,
	\item $\Lambda^w(G):=\inf\left \{ \det_G(A) \ | \ A \in \sqcup_{n \in \N} R_{M_{n}(\Z G)}, \ A \text{ injective}, \ \det_G(A)>1 \right \}$,
	\item $\Lambda_1(G):=\inf\left \{ \det_G(A) \ | \ A \in R_{\Z G}, \ \det_G(A)>1 \right \}$,
	\item $\Lambda^w_1(G):=\inf\left \{ \det_G(A) \ | \ A \in R_{\Z G}, \ A \text{ injective}, \ \det_G(A)>1 \right \}$.
\end{itemize}
Observe that 
$1 \leqslant \Lambda(G) \leqslant \Lambda^w(G) \leqslant \Lambda^w_1(G)$
and $1 \leqslant \Lambda(G) \leqslant \Lambda_1(G) \leqslant \Lambda^w_1(G)$.

\begin{rema}\label{rem:lehmer:subgroup}
	If $H$ is a subgroup of $G$, it follows from Proposition \ref{prop:detFK:properties} (3) that $\lambda(G) \leqslant \lambda(H)$ for any $\lambda \in \{\Lambda, \Lambda_1, \Lambda^w, \Lambda^w_1\}$.
\end{rema}

\begin{rema}
	Lehmer's constants are motivated by  generalisations of the well-known Lehmer problem (see Question \ref{qu:lehmer}).
	Indeed, Question \ref{qu:lehmer} can be rephrased as
	\begin{enumerate}
		\item Do we have $\Lambda_1^w(\Z)>1$?
		\item Do we have $\Lambda_1^w(\Z)=\mathcal{M}(L)=1.176280818...$?
	\end{enumerate}
\end{rema}

As explained in \cite{Lu2}, if there is no upper bound on the order of finite subgroups of $G$ (for instance if $G$ is the lamplighter group), then all four Lehmer's constants of $G$ are equal to $1$. Hence
Lehmer's constants are especially interesting to study for \textit{torsion-free} groups. For this class of groups, Lück stated the following natural questions:
\begin{question}[\cite{Lu2}]\label{qu:lehmer:luck} \
	\begin{enumerate}
		\item For which torsion-free groups $G$ are
		all four Lehmer's constants $\Lambda(G)$, $\Lambda^w(G)$, $\Lambda_1(G)$ and $\Lambda^w_1(G)$  greater than $1$?
		\item For which torsion-free groups $G$  do we have
		$$  \Lambda(G) = \Lambda^w(G) = \Lambda_1(G)= \Lambda^w_1(G) = \mathcal{M}(L)=1.17628... \ ?$$
	\end{enumerate}
\end{question}

\begin{exem}[\cite{Lu2}, Example 13.2]\label{ex:weeks}
	Lück provided a partial negative answer to Question \ref{qu:lehmer:luck} (2) by proving that $$\Lambda(G) \leqslant \Lambda^w(G) \leqslant
	\exp\left (\frac{\mathrm{vol}(M_{We})}{6 \pi}\right ) =
	1.05128... < \mathcal{M}(L)=1.17628...$$ for $G=\pi_1(M_{We})$ the fundamental group of the hyperbolic Weeks manifold $M_{We}$. This manifold is the closed hyperbolic 3-manifold with smallest hyperbolic volume $\mathrm{vol}(M_{We})= 0.942707...$ among this class, and it can be obtained by doing Dehn fillings on both boundary components of the Whitehead link exterior.
	
	It follows from Remark \ref{rem:lehmer:subgroup} that the same upper bound on $\Lambda^w(.)$ is true for any group $G'$ containing $G$ as a subgroup.
	
	In Section \ref{sec:hyp} we will extend this result by proving that this upper bound applies to $\Lambda_1^w(G)$, and thus to all four Lehmer's constants of $G$.
\end{exem}

\subsection{$L^2$-torsions}

At first read, the reader can skip this section and simply think of the \textit{$L^2$-torsion} $T^{(2)}(M)$ of a hyperbolic $3$-manifold $M$ as a positive number computed from Fuglede--Kadison determinants over $\pi_1(M)$ and related to the hyperbolic volume of $M$.
We state here the necessary notions for Section \ref{sec:hyp} and we refer to \cite{Lu} and \cite{BA2} for more details.

A \textit{finitely generated Hilbert $\mathcal{N}(G)$-module} is a hilbert space $V$ on which there is a left $G$-action by isometries, and 
such that there exists a positive integer $m$ and an embedding $\phi$ of $V$ into $\bigoplus_{i=1}^m \ell^2(G)$ (in this paper, such spaces $V$ will always be of the form $\ell^2(G)^{\oplus n}$ for $n\in \N$). 

For $U$ and $V$ two finitely generated Hilbert $\mathcal{N}(G)$-modules, we will call $ f\colon U \rightarrow V$ a \textit{morphism of finitely generated Hilbert $\mathcal{N}(G)$-modules} if $f$ is a linear $G$-equivariant map, bounded for the respective scalar products of $U$ and $V$ (in this paper, these morphisms will simply be right multiplication operators).

A \textit{finite Hilbert $\NN(G)$-chain complex} $C_*$ is a sequence of morphisms of finitely generated Hilbert $\NN(G)$-modules
$$C_* = 0 \to C_n \overset{\partial_n}{\longrightarrow} C_{n-1} 
\overset{\partial_{n-1}}{\longrightarrow} \ldots
\overset{\partial_2}{\longrightarrow} C_1 \overset{\partial_1}{\longrightarrow} C_0 \to 0$$
such that $\partial_p \circ \partial_{p+1} =0$ for all $p$ (in this paper, $n$ will be at most $3$).

The \textit{$p$-th $L^2$-homology of $C_*$}  
$H_p^{(2)}(C_*) := \ker(\partial_p) / \overline{\mathrm{Im}(\partial_{p+1})}$ is a finitely generated Hilbert $\NN(G)$-module. 
We say that $C_*$ is \textit{weakly acyclic} if its $L^2$-homology is trivial. We say that $C_*$ is of \textit{determinant class} if all the operators $\partial_p$ are of determinant class.

Let $C_*$ be a finite Hilbert $\NN(G)$-chain complex as above. Its \textit{$L^2$-torsion} is
$$T^{(2)}(C_*) := \prod_{i=1}^n \det {}_{\NN(G)}(\partial_i)^{(-1)^i} \in \R_{>0}$$
if $C_*$ is weakly acyclic and of determinant class, and is  $T^{(2)}(C_*):=0$ otherwise.

Let $X$ be a connected compact CW-complex, $\pi=\pi_1(X)$ and  $\gamma\colon \pi \to G$ be a group homomorphism.
The cellular chain complex of $\widetilde{X}$ denoted
$$C_*(\widetilde{X},\Z) = 
\left (\ldots \to \bigoplus_i \Z[\pi] \widetilde{e}_i^k \to \ldots\right )$$
is a chain complex of left $\Z[\pi]$-modules.
Here the $\widetilde{e}_i^k$ are lifts of the cells $e_i^k$ of $X$.
The group $\pi$ acts on the right on $\ell^2(G)$ by $g \mapsto R_{\gamma(g)}$, an action which induces a structure of right $\Z[\pi]$-module on $\ell^2(G)$.
Let
$$C_*^{(2)}(X,\gamma) = \ell^2(G) \otimes_{\Z[\pi]}  C_*(\widetilde{X},\Z)$$
denote the finite Hilbert $\NN(G)$-chain complex obtained by tensor product via these left- and right-actions; we call $C_*^{(2)}(X,\gamma)$ \textit{an $\NN(G)$-cellular chain complex of $X$}. 
We then denote
$$ T^{(2)}(X,\gamma) := T^{(2)}\left (C_*^{(2)}(X,\gamma)\right )$$
the \textit{twisted $L^2$ torsion of $X$ by $\gamma$}. It is non-zero if and only if $C_*^{(2)}(X,\gamma)$ is weakly acyclic and of determinant class.

From classical theorems due to Chapman and Cohen, any two CW-structures $X, Y$ on a compact smooth 3-manifold $M$ are simple homotopy equivalent, which implies that their twisted $L^2$-torsions satisfy
$T^{(2)}(Y,\gamma) = T^{(2)}(X, \gamma \circ f_\sharp),$
where $f: X \to Y$ is a simple homotopy equivalence, $f_\sharp$ the induced isomorphism of fundamental groups, and $\gamma: \pi_1(Y) \to G$ a group homomorphism (see for example \cite[Section 2.1.3]{BA} or \cite[Chapter 3]{Lu} for further details and references). As a consequence, in the remainder of the paper, for any \textit{hyperbolic 3-manifold} $M$ (i.e. a compact connected oriented 3-manifold with empty or toroidal boundary, whose \textit{interior} admits a complete hyperbolic structure of finite volume), we will write $T^{(2)}(M,\gamma)$ for simplicity, without making the CW-structure or the group isomorphism explicit.

\begin{theo}[Lück--Schick \cite{LS}]\label{thm:LS}
	Let $M$ be a hyperbolic $3$-manifold of finite volume $\mathrm{vol}(M)$. Then
	$$T^{(2)}(M)=\exp\left (\dfrac{\mathrm{vol}(M)}{6\pi}\right ).$$
\end{theo}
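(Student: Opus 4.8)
The plan is to pass to the analytic description of the $L^2$-torsion and to exploit that the universal cover of \emph{every} hyperbolic $3$-manifold is isometric to $\mathbb{H}^3$, so that $T^{(2)}(M)$ can only depend on the single real parameter $\mathrm{vol}(M)$. First I would check that $T^{(2)}(M)$ is indeed a well-defined element of $\R_{>0}$, i.e. that an $\mathcal{N}(\pi_1 M)$-cellular chain complex of $M$ is weakly acyclic and of determinant class: weak acyclicity holds because $M$ is aspherical with infinite fundamental group, so $b^{(2)}_0(M)=0$, and then Poincaré duality together with $\chi(M)=0$ forces $b^{(2)}_p(M)=0$ for all $p$; the determinant-class property follows from residual finiteness (more generally soficity) of $3$-manifold groups. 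Then I would invoke the $L^2$-version of the Cheeger--Müller theorem (Burghelea--Friedlander--Kappeler--McDonald, and Lück) to identify, for closed $M$, the combinatorial torsion $T^{(2)}(M)$ with the analytic $L^2$-torsion $T^{(2)}_{\mathrm{an}}(M)$ built from the $\Gamma$-regularized determinants of the form Laplacians $\Delta_p$ on $L^2$-forms of $\widetilde M=\mathbb{H}^3$.

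The heart of the argument is the proportionality principle: for closed hyperbolic $M$ one has $T^{(2)}_{\mathrm{an}}(M)=\exp\!\big(c_3\,\mathrm{vol}(M)\big)$ with $c_3$ a universal constant, because the relevant heat-trace density $\mathrm{tr}_{\mathbb{C}}\big(e^{-t\Delta_p}(x,x)\big)$ — and hence the $\Gamma$-regularised integral producing the von Neumann-theoretic $\zeta$-determinant — is a pointwise invariant of the local hyperbolic geometry, so integrating over a fundamental domain just multiplies it by the covolume. To evaluate $c_3=\tfrac{1}{6\pi}$ one carries out explicit harmonic analysis on $\mathbb{H}^3$: the scalar heat kernel on $\mathbb{H}^3$ is known in closed form, the Laplacians on $p$-forms are diagonalised by the spherical Plancherel decomposition for $\mathrm{PSL}_2(\mathbb{C})$, and forming the $p$-graded alternating sum of the corresponding zeta functions and differentiating at $s=0$ produces $\tfrac{1}{6\pi}$; the sign is fixed so that $T^{(2)}>1$, consistently with the conventions behind Proposition~\ref{prop:detFK:properties}.

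It remains to treat a finite-volume manifold $M$ with genuine cusps, where $\widetilde M$ is still $\mathbb{H}^3$ but $M$ is non-compact, so the heat-trace integrals above need extra justification near the cusps. I would do this by geometric approximation: by Thurston's hyperbolic Dehn surgery theorem there is a sequence of closed hyperbolic manifolds $M_k$, obtained by Dehn filling the cusps of $M$, with $\mathrm{vol}(M_k)\nearrow\mathrm{vol}(M)$ and geometry converging on compact sets. Applying the gluing (Mayer--Vietoris) formula for $L^2$-torsion — the filling solid tori and the separating tori contribute trivially, their fundamental groups being amenable and their Euler characteristics zero — together with a continuity estimate for the regularized determinants under this degeneration, one obtains $T^{(2)}(M)=\lim_k T^{(2)}(M_k)=\lim_k\exp\!\big(\tfrac{1}{6\pi}\mathrm{vol}(M_k)\big)=\exp\!\big(\tfrac{1}{6\pi}\mathrm{vol}(M)\big)$. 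I expect this last step to be the main obstacle: establishing the required continuity of $L^2$-torsion under Dehn filling amounts to uniform control of the small part of the spectrum near the cusps (equivalently, uniform bounds on Novikov--Shubin-type data), which is the genuinely technical input; by comparison the volume-proportionality for closed manifolds and the explicit evaluation of $c_3$ are routine once the analytic machinery is in place.
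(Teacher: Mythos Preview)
The paper does not prove Theorem~\ref{thm:LS}; it is quoted from L\"uck--Schick \cite{LS} and used as a black box (notably at the end of the proof of Theorem~\ref{thm:dehn:filling}). So there is no ``paper's own proof'' to compare your proposal against.

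That said, your sketch for the closed case is a fair outline of the L\"uck--Schick strategy: weak acyclicity and determinant class, identification of combinatorial and analytic $L^2$-torsion, the proportionality principle giving $T^{(2)}_{\mathrm{an}}(M)=\exp(c_3\,\mathrm{vol}(M))$, and the computation of $c_3$ by harmonic analysis on $\mathbb{H}^3$. Where your proposal diverges from \cite{LS} is the cusped case. L\"uck and Schick do \emph{not} approximate a cusped $M$ by closed Dehn fillings and invoke continuity of $T^{(2)}$; instead they work directly on the non-compact manifold, exhausting it by truncated pieces $M_R$ with toroidal boundary, and control the analytic torsion as $R\to\infty$ by explicit estimates on the heat kernel and the spectral density near the cusps. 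Your Dehn-filling route has a structural problem in the present paper's context: the only tool here relating $T^{(2)}$ of $M$ and of its fillings is the surgery formula (Theorem~\ref{thm:BA:surgery}), which already \emph{presupposes} Theorem~\ref{thm:LS}, so using it to deduce the cusped case would be circular. More generally, the ``continuity of $L^2$-torsion under hyperbolic Dehn filling'' you would need is not a standard lemma one can cite; establishing it is at least as hard as the direct cusp analysis in \cite{LS}.
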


\begin{theo}[Surgery formula \cite{BA2}]\label{thm:BA:surgery}
	Let $M, N$ be hyperbolic $3$-manifolds of finite volume such that $M$ is obtained from $N$ by Dehn filling on a toroidal boundary component.
	Let $Q\colon \pi_1(N) \twoheadrightarrow \pi_1(M)$ denote the group epimorphism induced by the Dehn filling.
	Then
	$$T^{(2)}(N,Q)=T^{(2)}(M)=
	\exp\left (\dfrac{\mathrm{vol}(M)}{6\pi}\right ).$$
\end{theo}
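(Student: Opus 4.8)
The plan is to realise the Dehn filling geometrically and then apply the sum (Mayer--Vietoris) formula for $L^2$-torsion. Write $M = N\cup_T V$, where $V\cong D^2\times S^1$ is the solid torus glued to the filled boundary torus $T\subset\partial N$ so that $\partial D^2\times\{*\}$ is identified with the filling slope $\mu\subset T$, and $T = N\cap V$. On fundamental groups this realises $Q\colon\pi_1(N)\twoheadrightarrow\pi_1(N)/\langle\!\langle\mu\rangle\!\rangle = \pi_1(M)$. Put $G:=\pi_1(M)$, let $\widehat M$ be its universal cover, and let $\widehat N,\widehat V,\widehat T\subset\widehat M$ be the preimages of $N,V,T$; then the cellular Mayer--Vietoris sequence of the decomposition $\widehat M = \widehat N\cup_{\widehat T}\widehat V$ is a short exact sequence of chain complexes of finitely generated free $\Z G$-modules, and since these complexes are free, tensoring with $\ell^2(G)$ over $\Z G$ preserves exactness and yields a short exact sequence of finite Hilbert $\NN(G)$-chain complexes
$$0 \longrightarrow C_*^{(2)}(T,\gamma_T) \longrightarrow C_*^{(2)}(N,Q)\oplus C_*^{(2)}(V,\gamma_V) \longrightarrow C_*^{(2)}(M) \longrightarrow 0,$$
where $\gamma_V$ and $\gamma_T$ are the composites of $Q$ with the inclusions of $\pi_1(V)$, $\pi_1(T)$ into $\pi_1(M)$.

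Next I would check that the two "filling" torsions are trivial. Let $c\in G$ be the image under $\gamma_V$ of the core circle $\{0\}\times S^1\subset V$; being the core geodesic of a hyperbolic Dehn filling it is nontrivial, and since $G$ is torsion-free it has infinite order. The solid torus $V$ is simple homotopy equivalent to $S^1$, so $C_*^{(2)}(V,\gamma_V)$ is concentrated in degrees $1$ and $0$ with $\partial_1 = \Id - R_c$ (up to a sign irrelevant to the determinant); by Proposition \ref{prop:detFK:properties} (5) this complex is weakly acyclic and $\det_G(\Id - R_c) = \max(1,1) = 1$, hence $T^{(2)}(V,\gamma_V) = 1$. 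For the torus $T = T^2$ with the one-relator presentation $\langle\mu,\lambda\mid[\mu,\lambda]\rangle$, the image of $\pi_1(T)=\Z^2$ in $G$ is the infinite cyclic group $\langle c\rangle$, with $\mu\mapsto 1$ and $\lambda\mapsto c$; a Fox calculus computation then shows that the differentials of $C_*^{(2)}(T,\gamma_T)$ are, after reordering the $1$-cells, $\Id - R_c$ placed next to zero blocks, so this complex is weakly acyclic and $T^{(2)}(T,\gamma_T) = \det_G(\Id - R_c)^{-1}\det_G(\Id - R_c) = 1$.

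By Theorem \ref{thm:LS} the complex $C_*^{(2)}(M)$ is weakly acyclic and of determinant class. I would then invoke the sum formula for $L^2$-torsion from \cite[Chapter 3]{Lu}: for a short exact sequence of finite Hilbert $\NN(G)$-chain complexes whose associated long weakly exact homology sequence is trivial and all of whose complexes are of determinant class, $L^2$-torsion is multiplicative, and the missing acyclicity and determinant-class properties of a third complex can be deduced from those of the other two. Applying this to the Mayer--Vietoris sequence above, the vanishing of $L^2$-homology for $T$, $V$ and $M$ forces $C_*^{(2)}(N,Q)$ to be weakly acyclic of determinant class, and multiplicativity yields
$$T^{(2)}(N,Q)\cdot T^{(2)}(V,\gamma_V) = T^{(2)}(T,\gamma_T)\cdot T^{(2)}(M),$$
that is $T^{(2)}(N,Q) = T^{(2)}(M)$; the remaining equality $T^{(2)}(M)=\exp(\mathrm{vol}(M)/6\pi)$ is again Theorem \ref{thm:LS}.

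I expect the main obstacle to be the careful bookkeeping around the hypotheses of the sum formula when the two summands are not a priori known to be weakly acyclic: one must track the $L^2$-torsion of the long weakly exact Mayer--Vietoris homology sequence and verify that it contributes trivially, and one must establish the geometric input that $\ker(Q)\cap\pi_1(T)$ is exactly the cyclic subgroup generated by the filling slope, so that the image of $\pi_1(T)$ in $G$ is $\langle c\rangle\cong\Z$; this uses incompressibility of $T$ in $N$ together with hyperbolicity of $M$. The remaining computations are the elementary ones via Proposition \ref{prop:detFK:properties} (5) indicated above; this is essentially the content of \cite{BA2}.
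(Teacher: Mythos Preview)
The paper does not supply its own proof of this theorem: it is quoted as a black box from \cite{BA2} (see also its use in the proof of Theorem \ref{thm:dehn:filling}, where the nontriviality of the core class is argued via irreducibility of $N$ rather than via a core geodesic). So there is no in-paper proof to compare against.

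That said, your sketch is the correct outline and matches the method of the cited reference, whose very title is ``Gluing formulas for the $L^2$-Alexander torsions'': one decomposes $M=N\cup_T V$, applies the sum formula for $L^2$-torsion \cite[Theorem 3.35]{Lu} to the resulting short exact sequence of $\NN(G)$-chain complexes, and reduces to the computations $T^{(2)}(V,\gamma_V)=T^{(2)}(T,\gamma_T)=1$ via Proposition \ref{prop:detFK:properties} (5), using that the core $c$ has infinite order in the torsion-free group $G$. Your identification of the differentials of $C_*^{(2)}(T,\gamma_T)$ after sending $\mu\mapsto 1$, $\lambda\mapsto c$ is right, and the multiplicativity equation you wrote is the correct instance of the sum formula. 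Two small caveats worth tightening: (i) ``core geodesic'' is not guaranteed by the hypothesis that $M$ is hyperbolic; the robust argument for $c\neq 1$ is the one the paper gives later (triviality of $c$ would force a separating $2$-sphere in the irreducible $N$); (ii) as you anticipate, the sum formula in \cite{Lu} requires checking that all four complexes are of determinant class and that the long weakly exact homology sequence is trivially acyclic, so the bookkeeping you flag is exactly where the work lies. With those points addressed, your argument is essentially the proof of \cite[Proposition 4.2]{BA2}.
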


\begin{rema}
	Initially, a general twisted $L^2$-torsion $T^{(2)}(M,\gamma)$ is defined as an alternated product of several Fuglede--Kadison determinants of operators of various sizes. However, in several important cases (see Section \ref{sec:hyp}), convenient cellular decompositions and determinant simplifications help us see that $T^{(2)}(M,\gamma)$ can be expressed as a single Fuglede--Kadison determinant of a square operator of size $1$.
\end{rema}

\section{Fuglede-Kadison determinants over free groups}\label{sec:detFK}

In this section, we present a general method to compute Fuglede-Kadison determinants via counting paths on Cayley graphs, and we apply this method to symmetric operators over free groups (studied by Bartholdi and Dasbach-Lalin).

\subsection{Computing Fuglede-Kadison determinants from Cayley graphs}

The following proposition gives a method to compute the Fuglede-Kadison determinant $\det_G(A)$ using the generating series $u_{A^* A}(t)$ associated to the number of closed paths on a Cayley graph associated to $G$ and $A^* A$. This method was studied by Dasbach-Lalin in \cite{DL} and by Lück in \cite[Section 3.7]{Lu} with slight differences in notation, and we provide a complete proof here for the reader's convenience.

\begin{prop}\label{prop:det_traces}
	Let $G$ be a finitely presented group and let $A \in R_{\C G}$ denote an injective right multiplication operator on $\ell^2(G)$ by a non-zero element of the group algebra. 
	Then for any $\lambda \in \left (0,\|A\|^{-2}\right )$, we have:
	\begin{align*}
	\det{}_{G}(A) &= \lim_{\varepsilon \to 0^+} \dfrac{1}{\sqrt{\lambda}} \exp \left ( -\frac{1}{2}
	\int_0^1 \dfrac{w_{\lambda,\varepsilon}(t)-1}{t} dt
	\right )\\
	&=\lim_{\varepsilon \to 0^+} \dfrac{1}{\sqrt{\lambda}} \exp \left ( -\frac{1}{2}
	\int_0^1 \dfrac{\frac{1}{1-(1-\lambda \varepsilon)t} \ u_{A^*A}\left (\dfrac{-\lambda t}{1-(1-\lambda \varepsilon)t}\right )-1}{t} dt
	\right ),
	\end{align*}
	where
	$$
	u_{A^* A}(t):= \sum_{k=0}^{\infty} 
	\mathrm{tr}_{G}\left ( (A^* A)^k
	\right ) t^k
	$$
	is a convergent power series for $t$ small enough,
	and
	$$w_{\lambda,\varepsilon}(t) = \sum_{n=0}^{\infty}
	\mathrm{tr}_{G}\left (
	\left (
	(1-\lambda \varepsilon) \Id - \lambda A^* A
	\right )^n
	\right ) t^n=\frac{1}{1-(1-\lambda \varepsilon)t} \ u_{A^*A}\left (\dfrac{-\lambda t}{1-(1-\lambda \varepsilon)t}\right )$$
	is a convergent power series for $\varepsilon$ small enough and $|t|<1$ .
\end{prop}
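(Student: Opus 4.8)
The plan is to start from property (8) of Proposition \ref{prop:detFK:properties}, namely $\det_G(A) = \lim_{\varepsilon \to 0^+} \sqrt{\det_G(A^*A + \varepsilon \Id)}$, and to reduce everything to computing the Fuglede--Kadison determinant of a positive operator via property (4), $\det_G(P) = (\exp \circ \operatorname{tr}_G \circ \ln)(P)$. Fix $\lambda \in (0, \|A\|^{-2})$ and set $B_\varepsilon := (1-\lambda\varepsilon)\Id - \lambda A^*A$. A direct computation gives $\lambda(A^*A + \varepsilon\Id) = \Id - B_\varepsilon$, so by multiplicativity (1) and dilations (9), $\det_G(A^*A+\varepsilon\Id) = \lambda^{-1}\det_G(\Id - B_\varepsilon)$. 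The key analytic point is that for $\varepsilon$ small the operator $B_\varepsilon$ is positive with $\|B_\varepsilon\| < 1$: indeed $\operatorname{spec}(A^*A) \subseteq [0,\|A\|^2]$ and $\|A\|^2 < \lambda^{-1}$, so $\operatorname{spec}(B_\varepsilon) \subseteq (1-\lambda\varepsilon - \lambda\|A\|^2,\, 1-\lambda\varepsilon] \subseteq (-1,1)$ once $\varepsilon$ is small, actually $\subseteq [0,1)$. Hence $\ln(\Id - B_\varepsilon)$ is given by the norm-convergent series $-\sum_{n\geqslant 1} B_\varepsilon^n / n$, and applying $\operatorname{tr}_G$ termwise (legitimate since $\operatorname{tr}_G$ is continuous and $\|B_\varepsilon^n\| \leqslant \|B_\varepsilon\|^n$ is summable) yields
$$ \ln \det{}_G(\Id - B_\varepsilon) = \operatorname{tr}_G \ln(\Id - B_\varepsilon) = -\sum_{n=1}^\infty \frac{\operatorname{tr}_G(B_\varepsilon^n)}{n}. $$
Recognising $\operatorname{tr}_G(B_\varepsilon^n)$ as the coefficient of $t^n$ in $w_{\lambda,\varepsilon}(t)$, and using $\sum_{n\geqslant 1} c_n/n = \int_0^1 (\sum_{n\geqslant 1} c_n t^n)/t \, dt = \int_0^1 (w_{\lambda,\varepsilon}(t)-1)/t \, dt$ (valid because $w_{\lambda,\varepsilon}$ has radius of convergence $>1$ thanks to $\|B_\varepsilon\|<1$, so the series converges uniformly on $[0,1]$ and the integrand extends continuously to $t=0$ with value $\operatorname{tr}_G(B_\varepsilon) = 1-\lambda\varepsilon$), gives the first displayed formula after taking $\exp$, dividing by $\sqrt{\lambda}$, and letting $\varepsilon \to 0^+$.

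For the second equality I would simply establish the identity
$$ w_{\lambda,\varepsilon}(t) = \frac{1}{1-(1-\lambda\varepsilon)t}\; u_{A^*A}\!\left(\frac{-\lambda t}{1-(1-\lambda\varepsilon)t}\right) $$
at the level of formal power series and then check convergence. Write $B_\varepsilon = (1-\lambda\varepsilon)\Id - \lambda A^*A$ and expand $(B_\varepsilon)^n$ by the binomial theorem (the two summands commute), take $\operatorname{tr}_G$, and reorganise the double sum $\sum_n (\sum_k \binom{n}{k}(1-\lambda\varepsilon)^{n-k}(-\lambda)^k \operatorname{tr}_G((A^*A)^k)) t^n$ by collecting the power of $(A^*A)^k$: the factor multiplying $\operatorname{tr}_G((A^*A)^k)$ is $(-\lambda)^k \sum_{n\geqslant k}\binom{n}{k}((1-\lambda\varepsilon)t)^{n-k} t^k = (-\lambda t)^k (1-(1-\lambda\varepsilon)t)^{-(k+1)}$, which is exactly the coefficient extraction matching the claimed substitution. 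Convergence of $u_{A^*A}(t)$ for small $t$ follows from $|\operatorname{tr}_G((A^*A)^k)| \leqslant \|A\|^{2k}$, and the substituted argument $\frac{-\lambda t}{1-(1-\lambda\varepsilon)t}$ lies inside the disc of convergence of $u_{A^*A}$ for $|t|<1$ and $\varepsilon$ small because its modulus is bounded by $\frac{\lambda|t|}{1-(1-\lambda\varepsilon)|t|} < \frac{\lambda}{\lambda\varepsilon} \cdot$ — more carefully, one bounds it below $\|A\|^{-2}$ by combining $|t|<1$ with $\lambda\|A\|^2 < 1$, which is the little estimate I would spell out.

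The main obstacle I anticipate is not any single hard step but the bookkeeping of \emph{quantifiers on $\varepsilon$ and $t$}: one must choose $\varepsilon$ small enough (depending on $\lambda$ and $\|A\|$) so that simultaneously $B_\varepsilon \geqslant 0$, $\|B_\varepsilon\| < 1$, the series for $w_{\lambda,\varepsilon}$ converges on a neighbourhood of $[0,1]$, and the substituted argument of $u_{A^*A}$ stays in its disc of convergence for all $t \in [0,1)$ — and then justify interchanging $\operatorname{tr}_G$ with the infinite sum and with the integral. All of these are routine once the spectral bound $\operatorname{spec}(A^*A) \subseteq [0,\|A\|^2]$ with $\|A\|^2 < \lambda^{-1}$ is in hand, but they need to be stated in the right order. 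A secondary technical point is that property (8) and properties (1), (9) are phrased for $\det_G$ on injective operators; since $A$ is assumed injective, $A^*A$ and $A^*A+\varepsilon\Id$ are injective (indeed positive definite for $\varepsilon>0$), so all the invocations are legitimate, and I would note this explicitly to close the argument.
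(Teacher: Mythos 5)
Your overall route is the same as the paper's: start from $\det_G(A)=\lim_{\varepsilon\to 0^+}\sqrt{\det_G(A^*A+\varepsilon\Id)}$ (property (8)), reduce to $(\exp\circ\operatorname{tr}_G\circ\ln)$ of the positive operator $\Id-B_\varepsilon$ with $B_\varepsilon=(1-\lambda\varepsilon)\Id-\lambda A^*A$, expand the logarithm, take the trace termwise, convert $\sum_n \operatorname{tr}_G(B_\varepsilon^n)/n$ into $\int_0^1 (w_{\lambda,\varepsilon}(t)-1)/t\,dt$, and obtain the second formula by the binomial rearrangement. For the first equality your argument is correct, and your justification of the sum--integral interchange (radius of convergence of $w_{\lambda,\varepsilon}$ at least $1/\|B_\varepsilon\|>1$, hence uniform convergence on $[0,1]$) is if anything cleaner than the paper's, which integrates termwise on $[0,T]$ and invokes Abel's theorem to let $T\to 1^-$.

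There is, however, one concrete error in the step you flagged as ``the little estimate I would spell out.'' The modulus of the substituted argument $\frac{-\lambda t}{1-(1-\lambda\varepsilon)t}$ is \emph{not} bounded below $\|A\|^{-2}$ on all of $[0,1)$: it is increasing in $t$ and tends to $1/\varepsilon$ as $t\to 1^-$, so for $t>\bigl(1-\lambda\varepsilon+\lambda\|A\|^2\bigr)^{-1}$ it leaves the disc of convergence of the power series $u_{A^*A}$ (whose radius is exactly $\|A\|^{-2}$, by faithfulness of the trace). The correct statement is that the rearranged double sum converges absolutely only for $t<\bigl(1-\lambda\varepsilon+\lambda\|A\|^2\bigr)^{-1}$, where it proves the claimed identity; on the rest of $[0,1)$ the identity holds by analytic continuation, reading $u_{A^*A}$ as the function $s\mapsto\operatorname{tr}_G\bigl((\Id-sA^*A)^{-1}\bigr)$, which is analytic on $(-\infty,0]$ because $A^*A\geqslant 0$, while the left-hand side $w_{\lambda,\varepsilon}$ is analytic on a neighbourhood of $[0,1]$. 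This is also how the formula is actually used in Theorem \ref{thm:detFd}, where the closed form of $u_{A^*A}$ from Proposition \ref{prop:series:free} is precisely that continuation. The paper's own proof is terse on this point as well, but your proposed fix (a pointwise bound keeping the argument inside the disc) would not go through, so you should replace it by the continuation argument.
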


\begin{proof}
	\underline{First equality:}
	Let $\lambda \in \left (0,\|A\|^{-2}\right )$.
	Since $A$ is  injective,  Proposition \ref{prop:detFK:properties} (8) implies that
	$$
	\det{}_{G}(A) = \lim_{\varepsilon \to 0^+}
	\sqrt{\det{}_{G}(A^* A + \varepsilon \Id)}.$$
	Let $\varepsilon >0$ such that $\lambda < \dfrac{1}{\|A\|^{2}+\varepsilon}<\dfrac{1}{\|A\|^{2}}$.
	Since $A^* A + \varepsilon \Id$ is  positive, we obtain:
	\begin{align*}
	\det{}_{G}(A^* A + \varepsilon \Id) 
	&= \left (\exp \circ \mathrm{tr}_{G} \circ \ln\right ) (A^* A + \varepsilon \Id) \\
	&= \dfrac{1}{\lambda}\left (\exp \circ \mathrm{tr}_{G} \circ \ln\right ) (\lambda A^* A + \lambda \varepsilon \Id)\\
	&= \dfrac{1}{\lambda}\left (\exp \circ \mathrm{tr}_{G} \circ \ln\right ) (\Id -(\Id -(\lambda A^* A + \lambda \varepsilon \Id)))\\
	&= \dfrac{1}{\lambda}\left (\exp \circ \mathrm{tr}_{G} \right ) \left (-\sum_{n=1}^{\infty} \frac{1}{n}(\Id -(\lambda A^* A + \lambda \varepsilon \Id))^n\right )\\
	&= \dfrac{1}{\lambda}\exp  \left (-\sum_{n=1}^{\infty} \frac{1}{n}\mathrm{tr}_{G}\left (
	\left (
	(1-\lambda \varepsilon) \Id - \lambda A^* A
	\right )^n
	\right )\right ),
	\end{align*}
	where the first equality follows from Proposition \ref{prop:detFK:properties} (4), the fourth one from holomorphic functional calculus and the fact that the spectrum of the positive operator $\lambda A^* A + \lambda \varepsilon \Id$ is inside $(0,1)$ (since
	$\lambda < \dfrac{1}{\|A\|^{2}+\varepsilon}$), and the fifth one from the continuity property of the von Neumann trace.
	
	Now, since the series $\sum_{n=1}^{\infty} \frac{1}{n}\mathrm{tr}_{G}\left (
	\left (
	(1-\lambda \varepsilon) \Id - \lambda A^* A
	\right )^n
	\right )$ converges, then
	$$w_{\lambda,\varepsilon}(t) := \sum_{n=0}^{\infty}
	\mathrm{tr}_{G}\left (
	\left (
	(1-\lambda \varepsilon) \Id - \lambda A^* A
	\right )^n
	\right ) t^n$$
	is a convergent power series for $|t|<1$, and moreover that for all $T \in (0,1)$,
	$$\sum_{n=1}^{\infty} \frac{1}{n}\mathrm{tr}_{G}\left (
	\left (
	(1-\lambda \varepsilon) \Id - \lambda A^* A
	\right )^n
	\right ) T^n = \int_0^T \dfrac{w_{\lambda,\varepsilon}(t)-1}{t} dt.$$
	Finally, once again since 	$\sum_{n=1}^{\infty} \frac{1}{n}\mathrm{tr}_{G}\left (
	\left (
	(1-\lambda \varepsilon) \Id - \lambda A^* A
	\right )^n
	\right )$ converges, we can apply Abel's theorem and make the passage $T\to 1^-$ in the previous equality. Hence:
	\begin{align*}
	\det{}_{G}(A) &= \lim_{\varepsilon \to 0^+}
	\sqrt{\det{}_{G}(A^* A + \varepsilon \Id)}\\
	&= \lim_{\varepsilon \to 0^+}
	\sqrt{\dfrac{1}{\lambda}\exp  \left (-\sum_{n=1}^{\infty} \frac{1}{n}\mathrm{tr}_{G}\left (
		\left (
		(1-\lambda \varepsilon) \Id - \lambda A^* A
		\right )^n
		\right )\right )}\\
	&= \lim_{\varepsilon \to 0^+}
	\sqrt{\dfrac{1}{\lambda}\exp  \left (-\int_0^1 \dfrac{w_{\lambda,\varepsilon}(t)-1}{t} dt\right )},
	\end{align*}
	and the first equality follows.
	
	\underline{Second equality:} 	Now we compute, for $\varepsilon>0$ small enough and $t \in [0,1)$:
	\begin{align*}
	w_{\lambda,\varepsilon}(t) &= \sum_{n=0}^{\infty}
	\mathrm{tr}_{G}\left (
	\left (
	(1-\lambda \varepsilon) \Id - \lambda A^* A
	\right )^n
	\right ) t^n\\
	&= \sum_{n=0}^{\infty}
	\mathrm{tr}_{G}\left (
	\sum_{k=0}^n \binom{n}{k}
	(1-\lambda \varepsilon)^{n-k} (-\lambda)^k (A^* A)^k
	\right ) t^n\\
	&= \sum_{n=0}^{\infty} \sum_{k=0}^n
	\binom{n}{k}
	(1-\lambda \varepsilon)^{n-k} (-\lambda)^k \mathrm{tr}_{G}\left ( (A^* A)^k
	\right ) t^n\\
	&= \sum_{k=0}^{\infty} 
	\left ( \dfrac{-\lambda}{1-\lambda \varepsilon} \right )^k \mathrm{tr}_{G}\left ( (A^* A)^k
	\right )
	\sum_{n=k}^\infty
	\binom{n}{k}
	(1-\lambda \varepsilon)^{n}  t^n\\
	&= \sum_{k=0}^{\infty} 
	\left ( \dfrac{-\lambda}{1-\lambda \varepsilon} \right )^k \mathrm{tr}_{G}\left ( (A^* A)^k
	\right )
	\dfrac{\left ((1-\lambda \varepsilon)t\right )^k}{\left (1-(1-\lambda \varepsilon)t\right )^{k+1}}
	\\
	&= \dfrac{1}{1-(1-\lambda \varepsilon)t} \sum_{k=0}^{\infty} 
	\left ( \dfrac{-\lambda t}{1-(1-\lambda \varepsilon)t} \right )^k \mathrm{tr}_{G}\left ( (A^* A)^k
	\right ),
	\end{align*}
	where the fifth equality follows from  the 
	binomial formula $$
	\dfrac{1}{(1-X)^\alpha}=\sum_{j=0}^\infty \binom{\alpha-1+j}{j} X^j,$$
	which itself follows from the usual Taylor series for $(1-X)^{-\alpha}$ and the binomial coefficient identity $ \binom{-\alpha}{j} = \binom{\alpha-1+j}{j} (-1)^j$.

	Thus, by denoting $u(t):= \sum_{k=0}^{\infty} 
	\mathrm{tr}_{G}\left ( (A^* A)^k
	\right ) t^k,$ we have
	$$w_{\lambda,\varepsilon}(t) = 
	\frac{1}{1-(1-\lambda \varepsilon)t} \ u\left (\dfrac{-\lambda t}{1-(1-\lambda \varepsilon)t}\right )$$
	and the second equality follows.
\end{proof}

\subsection{Fuglede--Kadison determinants for non-cyclic free groups}

The following proposition follows from works of Bartholdi and Dasbach-Lalin \cite{Ba, DL} on counting paths on regular trees. 

\begin{prop}\label{prop:series:free}
	Let $d \geqslant 3$,  let $x_1, \ldots, x_{d-1}$ be  $d-1$ generators of the free group $\F_{d-1}$, and let $\zeta_1, \ldots,\zeta_{d-1} \in  \C$ such that $|\zeta_1|=\ldots=|\zeta_{d-1}|=1$.
	
	For $G=\F_{d-1}$, $A=\Id + \zeta_1 R_{x_1} + \ldots + \zeta_{d-1} R_{x_{d-1}}$, and $t$ small enough, the following generating series is equal to:
	$$u_{A^* A}(t) = \sum_{k=0}^{\infty} 
	\mathrm{tr}_{G}\left ( (A^* A)^k
	\right ) t^k  = \dfrac{2d-2}{d-2+d\sqrt{1-4(d-1)t}}.$$ 
\end{prop}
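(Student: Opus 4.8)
The plan is to evaluate the moments $\mathrm{tr}_{\F_{d-1}}\big((A^{*}A)^{k}\big)$ one by one, reducing each to a count of closed walks on a regular tree and then reading off the generating series from the classical first-return computation of Bartholdi \cite{Ba} (the path-counting viewpoint being that of Dasbach--Lalin \cite{DL}).

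\emph{Step 1 (removing the phases $\zeta_i$).} Since $\F_{d-1}$ is free there is a character $\chi\colon\F_{d-1}\to U(1)$ determined by the $\zeta_i$ for which the diagonal unitary $U$ of $\ell^{2}(\F_{d-1})$ given by $U\delta_g=\chi(g)\delta_g$ fixes $\delta_e$ and satisfies $UR_{x_i}U^{-1}=\zeta_i R_{x_i}$, so that $U\big(\Id+\sum_i R_{x_i}\big)U^{-1}=A$. As $\mathrm{tr}_{\F_{d-1}}$ is evaluated at $\delta_e$ and $U\delta_e=\delta_e$, conjugation by $U$ changes no $\mathrm{tr}_{\F_{d-1}}\big((A^{*}A)^{k}\big)$; thus we may assume $\zeta_1=\dots=\zeta_{d-1}=1$ and set $A_{0}:=\Id+R_{x_1}+\dots+R_{x_{d-1}}$.

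\emph{Step 2 (writing $A_0^{*}A_0$ through free generators of $\F_d$).} Embed $\F_{d-1}$ in $\F_{d}=\langle y,x_1,\dots,x_{d-1}\rangle$. Then $R_{y}\circ A_{0}$ has the form $C:=R_{w_1}+\dots+R_{w_d}$, where $w_1,\dots,w_d$ is a free basis of $\F_d$ (a Nielsen transform of $y,x_1,\dots,x_{d-1}$); since $R_y$ is unitary, $C^{*}C=A_{0}^{*}A_{0}$, and by compatibility of von Neumann traces under $\F_{d-1}\subseteq\F_{d}$ (cf.\ Proposition~\ref{prop:detFK:properties}(3)) one gets $\mathrm{tr}_{\F_{d-1}}\big((A_{0}^{*}A_{0})^{k}\big)=\mathrm{tr}_{\F_{d}}\big((C^{*}C)^{k}\big)$. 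Expanding $C^{*}C=\sum_{i,j}R_{w_i}^{*}R_{w_j}$ and using that $\mathrm{tr}_{\F_{d}}(R_{g})$ is $1$ when $g=e$ and $0$ otherwise, this trace equals the number of tuples $(l_1,\dots,l_{2k})\in\{1,\dots,d\}^{2k}$ for which the alternating word $w_{l_1}^{-1}w_{l_2}w_{l_3}^{-1}\cdots w_{l_{2k}}$ is trivial in $\F_d$.

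\emph{Step 3 (identification with tree walks, and conclusion).} In that word consecutive letters always carry opposite exponents, hence cancel in $\F_{d}$ precisely when their indices coincide; deleting such a pair keeps the word alternating, so inductively it is trivial in $\F_{d}$ if and only if the tuple $(l_1,\dots,l_{2k})$ reduces to the empty string under the rewriting $\ell\,\ell\mapsto\varepsilon$. The very same criterion describes the tuples with $t_{l_1}\cdots t_{l_{2k}}=e$ in $(\Z/2\Z)^{*d}=\langle t_1,\dots,t_d\mid t_i^{2}=1\rangle$, whose Cayley graph is the $d$-regular tree $T_d$; therefore $\mathrm{tr}_{\F_{d-1}}\big((A^{*}A)^{k}\big)$ equals the number $a_{2k}$ of closed walks of length $2k$ based at a vertex of $T_d$ (there are no closed walks of odd length, $T_d$ being bipartite). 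A standard first-return decomposition of closed walks on $T_d$ (Bartholdi \cite{Ba}) yields the generating function $\sum_{m\ge 0}a_{m}t^{m}=\dfrac{2(d-1)}{d-2+d\sqrt{1-4(d-1)t^{2}}}$; substituting $t^{2}\mapsto t$ and recalling $\mathrm{tr}_{\F_{d-1}}\big((A^{*}A)^{k}\big)=a_{2k}$ gives
\[
u_{A^{*}A}(t)=\sum_{k\ge 0}a_{2k}\,t^{k}=\frac{2d-2}{d-2+d\sqrt{1-4(d-1)t}},
\]
a power series converging for $|t|<\tfrac{1}{4(d-1)}$, which is also the radius of convergence of the right-hand side.

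\emph{Main obstacle.} The one genuinely delicate point is the combinatorial book-keeping of Step 3: one must verify that \emph{every} cancellation in $w_{l_1}^{-1}w_{l_2}\cdots$ occurs between adjacent letters of equal index and that this alternating structure is stable under further reduction, so that triviality in $\F_d$, reducibility of the index tuple under $\ell\,\ell\mapsto\varepsilon$, and triviality in $(\Z/2\Z)^{*d}$ really do coincide. Everything else is the gauge unitary of Step 1, the trace-compatibility of Step 2, and the invocations of \cite{Ba} (for the tree generating function) and \cite{DL} (for the path-counting viewpoint).
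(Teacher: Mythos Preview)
Your proof is correct and reaches the same conclusion as the paper, but by a somewhat different route.

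For the reduction to $\zeta_i=1$, the paper argues combinatorially: expanding $(A^{*}A)^{k}$ over tuples $(j_1,\ldots,j_{2k})\in\{0,\ldots,d-1\}^{2k}$ (with $x_0:=1$, $\zeta_0:=1$), it observes that whenever $x_{j_1}x_{j_2}^{-1}\cdots x_{j_{2k}}^{-1}$ is trivial the nonzero indices pair off so that each $\zeta$ meets its conjugate, forcing every surviving weight to equal $1$. Your gauge-unitary argument in Step~1 is cleaner and more conceptual: it works uniformly for any character on any free group and sidesteps the bookkeeping of why paired indices sit at positions of opposite parity.

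For the $\zeta_i=1$ case, the paper simply cites \cite{Ba,DL} together with the remark that one may pick a $d$-regular tree as the relevant Cayley graph. Your Steps~2--3 make this explicit: passing to $\F_d$ and multiplying by the extra generator $y$ replaces $A_0=\Id+\sum_i R_{x_i}$ (whose ``letters'' include the identity) by $C=\sum_i R_{w_i}$ with $w_1,\ldots,w_d$ a genuine free basis; the alternating-word reduction then matches triviality in $\F_d$ with triviality in $(\Z/2\Z)^{*d}$, hence with closed walks on $T_d$. This embedding trick, which removes the awkward identity letter, is a nice device that the paper does not spell out.

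One minor remark: your citation of Proposition~\ref{prop:detFK:properties}(3) in Step~2 points to a statement about determinants; the trace compatibility you actually need ($\mathrm{tr}_H=\mathrm{tr}_G$ on $\C H$ for $H\leqslant G$) is the more elementary fact that both traces read off the coefficient of $e$, and could simply be stated directly.
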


\begin{proof}
	The case where $\zeta_1=\ldots=\zeta_{d-1}=1$ is proven in \cite{Ba, DL}. The main idea is the fact that $\mathrm{tr}_{G}\left ( (A^* A)^k
	\right )$  counts the number of closed paths on a Cayley graph associated to $G$ with generating set related to the operator $A$. In general, one can pick from several different Cayley graphs for this purpose, see \cite{Ba, DL} for specific examples and \cite{KW} for a more canonical choice of Cayley graph. As explained in \cite{Ba, DL}, we can pick a $d$-regular tree for the present problem. The generating series $u_{A^* A}(t)$ is then determined as the solution of a  functional equation, which is found by listing every possible form of a closed path on the graph.
	
	Let us consider the general case.
	For $k \in \N$, the coefficient $\mathrm{tr}_{G}\left ( (A^* A)^k
	\right )$ is equal to the sum of all terms $\zeta_{j_1}\overline{\zeta_{j_2}}\ldots \zeta_{j_{2k-1}}\overline{\zeta_{j_{2k}}}$ (where $j_{1}, \ldots, j_{2k} \in \{0,1, \ldots,d-1\}$ and $\zeta_0 :=1$) such that
	$x_{j_1}x_{j_2}^{-1}\ldots x_{j_{2k-1}}x_{j_{2k}}^{-1} \in \F_{d-1}$ is trivial (with the convention $x_0:=1$). Now, if $x_{j_1}x_{j_2}^{-1}\ldots x_{j_{2k-1}}x_{j_{2k}}^{-1}$ is trivial, then each non-zero index $j_i$ is paired  with another index $j_{i'}$ with $i' \neq i, j_{i'}=j_i$. Hence the associated term $\zeta_{j_1}\overline{\zeta_{j_2}}\ldots \zeta_{j_{2k-1}}\overline{\zeta_{j_{2k}}}$ is equal to $1$, like in the first case, and the result follows similarly.
\end{proof}

In the following theorem, we compute Fuglede-Kadison determinants of basic operators on the free groups, using the explicit generating series of Proposition \ref{prop:series:free}.

\begin{theo}\label{thm:detFd}
	Let $d \geqslant 3$, and let $x_1, \ldots, x_{d-1}$ be  $d-1$ generators of the free group $\F_{d-1}$.
	Let $\zeta_1, \ldots,\zeta_{d-1} \in  \C$ such that $|\zeta_1|=\ldots=|\zeta_{d-1}|=1$.
	Then we have:
	$$
	\det{}_{\F_{d-1}}(\Id + \zeta_1 R_{x_1} + \ldots + \zeta_{d-1} R_{x_{d-1}}) = \dfrac{(d-1)^{\frac{d-1}{2}}}{d^{\frac{d-2}{2}}}.
	$$
	
	In particular, for any two generators $x,y$ of the free group $\F_2$, we have:
	$$
	\det{}_{\F_2}(\Id + R_x + R_y) = \dfrac{2}{\sqrt{3}} = 1.1547... 
	$$
\end{theo}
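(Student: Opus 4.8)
The strategy is to apply Proposition \ref{prop:det_traces} directly, feeding in the closed form of the generating series $u_{A^*A}(t)$ obtained in Proposition \ref{prop:series:free}. First I would check that the operator $A = \Id + \zeta_1 R_{x_1} + \ldots + \zeta_{d-1} R_{x_{d-1}}$ is injective: since $\F_{d-1}$ is a free group it satisfies the strong Atiyah conjecture (Remark \ref{rem:atiyah}), so the right-multiplication operator by any non-zero element of $\C\F_{d-1}$ is injective; in particular $A$ is injective and of determinant class, and Proposition \ref{prop:det_traces} applies. I would also record the operator norm bound $\|A\| \leqslant 2\sqrt{d-1}$ (the norm of the adjacency operator of the $d$-regular tree, up to the unimodular twists which do not change it), so that the admissible range $\lambda \in (0, \|A\|^{-2})$ is nonempty; in fact I would simply fix a convenient value of $\lambda$, say $\lambda = \frac{1}{4(d-1)}$ approached from below, or keep $\lambda$ symbolic and watch it cancel.

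Next I would substitute $u_{A^*A}(t) = \dfrac{2d-2}{d-2+d\sqrt{1-4(d-1)t}}$ into the formula
$$
w_{\lambda,\varepsilon}(t) = \frac{1}{1-(1-\lambda\varepsilon)t}\, u_{A^*A}\!\left(\frac{-\lambda t}{1-(1-\lambda\varepsilon)t}\right),
$$
and then, \emph{before} integrating, take the limit $\varepsilon \to 0^+$ inside, replacing $1 - \lambda\varepsilon$ by $1$, so that the argument becomes $\frac{-\lambda t}{1-t}$ and the prefactor $\frac{1}{1-t}$. This reduces the problem to evaluating
$$
\det{}_{\F_{d-1}}(A) = \frac{1}{\sqrt{\lambda}} \exp\left(-\frac12 \int_0^1 \frac{w(t)-1}{t}\, dt\right),
\qquad
w(t) = \frac{1}{1-t}\cdot \frac{2d-2}{d-2 + d\sqrt{1 + \frac{4(d-1)\lambda t}{1-t}}}.
$$
(One must justify interchanging $\lim_{\varepsilon\to 0^+}$ with the integral, e.g.\ by dominated convergence on $[0,1)$ together with the integrability near $t=1$ coming from the power-series representation of $w_{\lambda,\varepsilon}$; this is a routine estimate I would relegate to a remark.) Then I would compute the antiderivative of $\frac{w(t)-1}{t}$ explicitly. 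The natural substitution is $s = \sqrt{1 + \frac{4(d-1)\lambda t}{1-t}}$ (so $s$ runs from $1$ at $t=0$ to $+\infty$ at $t=1$), which rationalizes the square root; after this change of variables $\frac{w(t)-1}{t}\,dt$ becomes a rational function of $s$ that one integrates by partial fractions, giving a combination of $\ln$ terms in $s$, $s-1$, $s+1$, and linear forms in $s$. This is exactly the "several antiderivatives found using Wolfram Alpha" step alluded to in the introduction.

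The main obstacle — and the place I expect the real work to be — is the bookkeeping at the endpoints $t \to 0^+$ and $t \to 1^-$: one must expand $w(t)$ near $t=0$ to see that $w(t)-1 = O(t)$ (so the integrand is genuinely integrable there and one extracts the correct constant), and one must track the logarithmic divergences near $t=1$ and verify that they cancel against the $\frac{1}{\sqrt\lambda}$ prefactor, so that the final answer is independent of the auxiliary parameter $\lambda$ — as it must be. Getting the constant exactly right requires care with the branch of the square root and with the $\frac{d-2}{2}$ versus $\frac{d-1}{2}$ exponents. Once the dust settles, the exponential of the integral should collapse to $\frac{1}{\sqrt\lambda}\cdot \frac{d^{(d-2)/2}}{(d-1)^{(d-1)/2}} \cdot \sqrt\lambda = \frac{(d-1)^{(d-1)/2}}{d^{(d-2)/2}}$; I would double-check the exponent arithmetic by testing $d=3$, where the claimed value is $\frac{2^{1}}{3^{1/2}} = \frac{2}{\sqrt 3}$, recovering the stated special case $\det_{\F_2}(\Id + R_x + R_y) = \frac{2}{\sqrt3}$, and also by comparing the $d\to\infty$ asymptotics against the expected growth $\sqrt{d-1}$ of the Fuglede--Kadison determinant of a sum of $d-1$ free unitaries plus the identity. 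The passage from general $\zeta_i$ to the untwisted case is already handled by Proposition \ref{prop:series:free}, so no extra argument is needed there.
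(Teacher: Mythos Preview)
Your plan is essentially the paper's own approach: feed Proposition~\ref{prop:series:free} into Proposition~\ref{prop:det_traces} and compute the resulting integral explicitly. The one tactical difference worth flagging is the order of operations. You propose to set $\varepsilon=0$ \emph{before} integrating and then justify the interchange by dominated convergence; the paper instead keeps $\varepsilon>0$ throughout, performs the change of variables $x=(1-\lambda\varepsilon)t$, finds a closed-form antiderivative $F$ (a combination of logarithms and $\artanh$ terms, via partial fractions after writing $R(x)=\frac{d\sqrt{1+ax/(1-x)}-(d-2)}{x((d^2a-4d+4)x+4d-4)}$), evaluates $I_{\lambda,\varepsilon}=F(1-\lambda\varepsilon)-F(0)$, and only then lets $\varepsilon\to 0^+$. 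This sidesteps the dominated-convergence justification entirely, at the cost of carrying $\varepsilon$ through the calculus. Your $\varepsilon=0$ integral does converge (the integrand behaves like $C/\sqrt{1-t}$ near $t=1$), so your route is viable, but the interchange is not quite as ``routine'' as you suggest and the paper's ordering is the safer bookkeeping choice. Your substitution $s=\sqrt{1+4(d-1)\lambda t/(1-t)}$ is a reasonable alternative to the paper's partial-fraction decomposition in $x$; either leads to the same elementary antiderivative and the same endpoint analysis, where the divergent $\ln$-terms at $t\to 0^+$ cancel and the $\lambda$-dependence disappears, exactly as you anticipate.
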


\begin{proof}
	Let $d \geqslant 3$ and $\zeta_1, \ldots,\zeta_{d-1}$ in the  unit circle. First we use Proposition \ref{prop:det_traces}, by taking $G=\F_{d-1}$, $A=\Id + \zeta_1 R_{x_1} + \ldots + \zeta_{d-1} R_{x_{d-1}}$, $\lambda \in (0,\frac{1}{d^2})$, and we obtain:
	\begin{align*}
	\det{}_{\F_{d-1}}(\Id + \zeta_1 R_{x_1} + \ldots + \zeta_{d-1} R_{x_{d-1}}) &= \det{}_{G}(A)\\
	&= \lim_{\varepsilon \to 0^+} \dfrac{1}{\sqrt{\lambda}} \exp \left ( -\frac{1}{2}
	\int_0^1 \dfrac{w_{\lambda,\varepsilon}(t)-1}{t} dt
	\right ),
	\end{align*}
	where  $$w_{\lambda,\varepsilon}(t) = \sum_{n=0}^{\infty}
	\mathrm{tr}_{G}\left (
	\left (
	(1-\lambda \varepsilon) \Id - \lambda A^* A
	\right )^n
	\right ) t^n$$
	for $\varepsilon>0$ small enough and $t \in [0,1)$.
	
	From Proposition \ref{prop:det_traces}, by denoting $u_{A^*A}(t):= \sum_{k=0}^{\infty} 
	\mathrm{tr}_{G}\left ( (A^* A)^k
	\right ) t^k,$ we have
	$$w_{\lambda,\varepsilon}(t) = 
	\frac{1}{1-(1-\lambda \varepsilon)t} \ u_{A^*A}\left (\dfrac{-\lambda t}{1-(1-\lambda \varepsilon)t}\right ).$$
	
	Since it suffices to prove that 
	$$\lim_{\varepsilon \to 0^+} \int_0^1 \dfrac{w_{\lambda,\varepsilon}(t)-1}{t} dt = \ln\left (\frac{d^{d-2}}{(d-1)^{d-1} \lambda}\right ),$$
	let us denote
	$I_{\lambda,\varepsilon}:=\int_0^1 \frac{w_{\lambda,\varepsilon}(t)-1}{t} dt$ and prove that 
	$I_{\lambda,\varepsilon} \underset{\varepsilon \to 0^+}{\to} \ln\left (\frac{d^{d-2}}{(d-1)^{d-1} \lambda}\right )$.
	
	It follows from Proposition \ref{prop:series:free} that
	$$u_{A^*A}(t) = \dfrac{2d-2}{d-2+d\sqrt{1-4(d-1)t}},$$ 
	thus
	$$I_{\lambda,\varepsilon}=\int_0^1 \frac{\frac{1}{1-(1-\lambda \varepsilon)t} \ \frac{2d-2}{d-2+d\sqrt{1+\frac{4(d-1)\lambda t}{1-(1-\lambda \varepsilon)t}}} -1}{t} dt.$$
	With the change of variables $x=(1-\lambda \varepsilon)	t$ and by denoting $a=\frac{4 (d-1) \lambda}{1-\lambda \varepsilon}$, we find
	$I_{\lambda,\varepsilon}= \int_{0}^{1-\lambda \varepsilon}\left (2(d-1)R(x)-\frac{1}{x}\right )dx,$
	where 
	$$R(x):=\dfrac{1}{x(1-x)\left (d-2+d\sqrt{1+\frac{ax}{1-x}}\right )}
	=\dfrac{d\sqrt{1+\frac{ax}{1-x}}-(d-2)}{x\left ((d^2 a-4d+4)x+4d-4\right )}.$$
	To find an antiderivative of $R(x)$, we first split it into a sum of partial fractions:
	$$R(x)=\dfrac{1}{4(d-1)}	\left (
	-\dfrac{d-2}{x} +  \dfrac{d-2}{x+\frac{4d-4}{d^2 a-4d+4}}+d\dfrac{\sqrt{1+\frac{ax}{1-x}}}{x}-d\dfrac{\sqrt{1+\frac{ax}{1-x}}}{x+\frac{4d-4}{d^2 a-4d+4}}
	\right ).$$
	For any generic constant $C$,  an antiderivative of
	$x \mapsto \dfrac{\sqrt{1+\frac{ax}{1-x}}}{x+\frac{4C-4}{C^2 a-4C+4}}$ is given by
	$$
	x \mapsto 2\sqrt{1-a} \cdot  \arsinh \left (\sqrt{\dfrac{(1-x)(1-a)}{a}}\right )
	-2\frac{C-2}{C}\artanh \left (
	\frac{C-2}{C} \sqrt{\dfrac{1-x}{ax-x+1}}
	\right ), 
	$$	
	hence the cases $C=1$ and $C=d$ provide the antiderivatives of the third and fourth terms in the previous expression of $R(x)$. The $\arsinh$ terms cancel, and we find the following antiderivative $F(x)$ for the function $\left (2(d-1)R(x)-\frac{1}{x}\right )$:
	\begin{align*}
	F(x)=&
	-\frac{d}{2}\ln(x)+\frac{d-2}{2}\ln\left |x+\frac{4d-4}{d^2 a-4d+4}\right |\\
	&-d\artanh\left (\sqrt{\dfrac{1-x}{ax-x+1}}\right )
	+(d-2)\artanh\left (\frac{d-2}{d} \sqrt{\dfrac{1-x}{ax-x+1}}\right ).
	\end{align*}
	From what precedes, we therefore have $I_{\lambda,\varepsilon}=F(1-\lambda\varepsilon)-F(0)$.
	
	Recall that $a, R, F$ all depend on $\varepsilon$, although it is not apparent in the notation.
	
	Since $\lim_{\varepsilon \to 0^+}(a) = 4(d-1)\lambda$, we can compute
	$$\lim_{\varepsilon \to 0^+}F(1-\lambda\varepsilon) 
	= \frac{d-2}{2}\ln\left |\frac{d^2 4(d-1)\lambda}{d^2 4(d-1)\lambda-4d+4}\right |.
	$$
	
	To compute $F(0)$, we first need to remark that
	\begin{align*}
	&\artanh\left (\sqrt{\dfrac{1-x}{ax-x+1}}\right ) 
	= \frac{1}{2}\ln\left (\dfrac{1+\sqrt{\dfrac{1-x}{ax-x+1}}}{1-\sqrt{\dfrac{1-x}{ax-x+1}}}\right )\\
	&=\ln\left (1+\sqrt{\dfrac{1-x}{ax-x+1}}\right )-\frac{1}{2}\ln\left (1-\dfrac{1-x}{ax-x+1}\right )\\
	&=\ln\left (1+\sqrt{\dfrac{1-x}{ax-x+1}}\right )+\frac{1}{2}\ln\left (ax-x+1\right )-\frac{1}{2}\ln\left (ax\right ).
	\end{align*}
	The only terms in $F(x)$ that diverge in $x=0$ are $-\frac{d}{2}\ln(x)$ and $(-d)\left (-\frac{1}{2}\ln\left (ax\right )\right )$, which cancel (leaving the term $\frac{d}{2}\ln(a)$). Hence we have:
	$$F(0)=\frac{d-2}{2}\ln\left |\frac{4d-4}{d^2 a-4d+4}\right |
	-d \ln(2) +\frac{d}{2}\ln(a)
	+(d-2)\artanh\left (\frac{d-2}{d}\right ).$$
	Since $\artanh\left (\frac{d-2}{d}\right )=\frac{1}{2}\ln(d-1)$, we find in the limit $\varepsilon\to 0^+$:
	$$F(0) \underset{\varepsilon \to 0^+}{\to} 
	\frac{d-2}{2}\ln\left |\frac{4d-4}{d^2 4(d-1)\lambda-4d+4}\right |
	-d \ln(2) +\frac{d}{2}\ln(4(d-1)\lambda)
	+\frac{d-2}{2}\ln(d-1),
	$$
	and thus $\lim_{\varepsilon \to 0^+} I_{\lambda,\varepsilon} = \lim_{\varepsilon \to 0^+} F(1-\lambda\varepsilon)-F(0)$ is equal to
	$$ \lim_{\varepsilon \to 0^+} = \ln\left (
	\dfrac{\left (d^2 4(d-1)\lambda\right )^{\frac{d-2}{2}} }
	{ \left (4d-4\right )^{\frac{d-2}{2}} 2^{-d} \left ( 4(d-1)\lambda\right )^{\frac{d}{2}}
		\left (d-1\right )^{\frac{d-2}{2}} }
	\right )=\ln\left (\frac{d^{d-2}}{(d-1)^{d-1} \lambda}\right ),
	$$
	which concludes the proof.
\end{proof}

The methods used in Proposition \ref{prop:series:free} and Theorem \ref{thm:detFd} can be generalised to other operators and Cayley graphs, as in the following corollary.

\begin{coro}
	Let $d \geqslant 2$, and let $x_1, \ldots, x_{d}$ be  $d$ generators of the free group $\F_{d}$.
	Let $\zeta_1, \xi_1, \ldots,\zeta_{d},\xi_d \in  \C$ such that $|\zeta_1|=|\xi_1|=\ldots=|\zeta_{d}|=|\xi_d|=1$.
	Let $A= \zeta_1 R_{x_1} + \xi_1 R_{x_1^{-1}} +\ldots + \zeta_{d}  R_{x_{d}}+ \xi_d R_{x_d^{-1}}$.
	Then we have:
	\begin{enumerate}
		\item For $t$ small enough, the following generating series is equal to:
		$$u_{A^* A}(t) = \sum_{k=0}^{\infty} 
		\mathrm{tr}_{\F_d}\left ( (A^* A)^k
		\right ) t^k  = \dfrac{4d-2}{2d-2+2d\sqrt{1-4(2d-1)t}}.$$ 
		\item The Fugelede-Kadison determinant of $A$ is equal to
		$$
		\det{}_{\F_{d}}\left (\zeta_1 R_{x_1} + \xi_1 R_{x_1^{-1}} +\ldots + \zeta_{d}  R_{x_{d}}+ \xi_d R_{x_d^{-1}}\right ) = \dfrac{(2d-1)^{\frac{2d-1}{2}}}{(2d)^{d-1}}.
		$$
	\end{enumerate}
\end{coro}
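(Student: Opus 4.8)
The plan is to mirror, almost verbatim, the arguments used for Proposition~\ref{prop:series:free} and Theorem~\ref{thm:detFd}: the generating series in part~(1) turns out to have \emph{exactly} the algebraic shape of the one in Proposition~\ref{prop:series:free}, so part~(2) will require essentially no new computation.

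For part~(1), I would first note that the Cayley graph of $\F_d$ with respect to the symmetric generating set $S=\{x_1^{\pm1},\ldots,x_d^{\pm1}\}$ is the $2d$-regular tree $T$, and that $\mathrm{tr}_{\F_d}((A^*A)^k)=\langle\delta_e,(A^*A)^k\delta_e\rangle$ expands as a sum, over closed walks of length $2k$ in $T$ based at $e$ whose moves alternate between $A$-steps and $A^*$-steps, of a product of phase weights (one factor $\eta_s$ or $\overline{\eta_s}$ per move). The key claim is that, for every such walk, the factors attached to any fixed edge $e_0$ of $T$ multiply to $1$: since $T$ is a tree the walk crosses $e_0$ equally often in each direction, and since $T$ is bipartite with respect to the distance to $e$ the parity of the time at which $e_0$ is crossed is determined by the direction of that crossing; hence all "forward'' crossings of $e_0$ are $A$-steps and all "backward'' ones are $A^*$-steps (or vice versa), and these contribute mutually conjugate phases of modulus $1$. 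It follows that $\mathrm{tr}_{\F_d}((A^*A)^k)$ equals the number of closed walks of length $2k$ in $T$, independently of the $\zeta_i,\xi_i$. The generating function for closed walks in the $q$-regular tree — obtained by Bartholdi and Dasbach--Lalin as the solution of the relevant functional equation — is $\tfrac{2(q-1)}{q-2+q\sqrt{1-4(q-1)z^2}}$; setting $q=2d$ and $z^2=t$ gives the stated formula for $u_{A^*A}(t)$.

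For part~(2), I would observe that the formula just proved reads $u_{A^*A}(t)=\tfrac{2q-2}{q-2+q\sqrt{1-4(q-1)t}}$ with $q=2d$, which is precisely the series appearing in Proposition~\ref{prop:series:free} with the parameter $q$ in place of $d$. Since $A$ is a non-zero element of $\C\F_d$ and $\F_d$ satisfies the strong Atiyah conjecture (Remark~\ref{rem:atiyah}), the operator $A$ is injective, and as $\|A\|\leqslant\sum_s|\eta_s|=2d=q$ one may apply Proposition~\ref{prop:det_traces} with any $\lambda\in(0,q^{-2})$. Plugging $u_{A^*A}$ into $w_{\lambda,\varepsilon}$ produces exactly the integral $I_{\lambda,\varepsilon}$ analysed in the proof of Theorem~\ref{thm:detFd}, with $d$ replaced by $q=2d$ throughout: that computation (the partial-fraction decomposition, the $\arsinh$/$\artanh$ antiderivatives, the cancellation of the $\arsinh$ terms, and the limit $\varepsilon\to0^+$) uses only the explicit form of $u_{A^*A}$ and the inequality $q\geqslant3$, which holds here since $q=2d\geqslant4$. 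Hence $\lim_{\varepsilon\to0^+}I_{\lambda,\varepsilon}=\ln\!\left(\tfrac{q^{q-2}}{(q-1)^{q-1}\lambda}\right)$ and
$$\det{}_{\F_d}(A)=\dfrac{(q-1)^{\frac{q-1}{2}}}{q^{\frac{q-2}{2}}}=\dfrac{(2d-1)^{\frac{2d-1}{2}}}{(2d)^{d-1}}.$$

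The only genuinely new ingredient, and hence the expected main obstacle, is the phase-cancellation step of part~(1): unlike in Proposition~\ref{prop:series:free}, where the word fed into $\F_{d-1}$ came with a fixed sign pattern on its letters, here each move of the walk may run along $x_j$ or along $x_j^{-1}$, so one must argue carefully — via the distance-parity bipartition of $T$ — that the forward and backward crossings of each edge are segregated by the $A$/$A^*$ alternation and therefore pair off into conjugate phases. Once part~(1) is established, part~(2) is merely bookkeeping: it is the proof of Theorem~\ref{thm:detFd} read with $d$ replaced by $2d$.
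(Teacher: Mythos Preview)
Your proposal is correct and follows essentially the same approach as the paper. For part~(1) the paper argues the phase cancellation via the non-crossing pairing in the free reduction of a trivial word (paired letters sit at positions of opposite parity, hence one comes from $A$ and the other from $A^*$), whereas you phrase the identical parity observation through the bipartite structure of the $2d$-regular tree; these are the same argument in different language, and your version is in fact the more detailed of the two. For part~(2) you do exactly what the paper does: observe that $u_{A^*A}$ has the shape of Proposition~\ref{prop:series:free} with $d$ replaced by $2d\geqslant 4\geqslant 3$, and rerun the computation of Theorem~\ref{thm:detFd} verbatim.
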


\begin{proof}
	(1) The case where $\zeta_1=\xi_1=\ldots=\zeta_{d}=\xi_d=1$ follows from \cite{Ba, DL} (this time the circuits are considered in a $(2d)$-regular tree). 
	
	The general case follows from a similar argument as in the proof of Proposition \ref{prop:series:free}, with a slight difference: this time no letter is trivial, thus any trivial word must be of even length, and therefore any coefficient $\zeta_i$ (resp. $\xi_i$) is necessarily multiplied with a coefficient equal to $\zeta_i^*$ (resp. $\xi_i^*$), and vice-versa.
	
	(2) The result follows from (1) (i.e. the value of $u_{A^*A}(t)$) as in the proof of Theorem \ref{thm:detFd}, except that each $d$ is replaced with $2d$.
\end{proof}

\subsection{Consequences on Lehmer's constants}

Now, as a consequence of Theorem \ref{thm:detFd}, we obtain new upper bounds on Lehmer's constants and a negative answer to Question \ref{qu:lehmer:luck} (2) for a large class of  groups:

\begin{coro}\label{cor:lehmer}
	For every $d \geqslant 2$, Lehmer's constants
	$\Lambda(\F_d), \Lambda_1(\F_d), \Lambda^w(\F_d)$ and $\Lambda^w_1(\F_d)$ do not depend on $d$. 
	Moreover, for every $d \geqslant 2$, we have
	$$ \Lambda(\F_d) \leqslant \Lambda^w(\F_d) \leqslant \Lambda^w_1(\F_d) = \Lambda_1(\F_d) \leqslant \dfrac{2}{\sqrt{3}} = 1.1547... < \mathcal{M}(L) = 1.17628...$$

	In particular, any torsion-free group $G$ containing a subgroup $\F_d$ for  $d\geqslant 2$ (such as the fundamental group of a hyperbolic $3$-manifold, see \cite[C.3, C.26]{AFW})  also satisfies
	$$\Lambda(G), \Lambda_1(G), \Lambda^w(G), \Lambda^w_1(G) \in \left [1, \dfrac{2}{\sqrt{3}}\right ].$$
\end{coro}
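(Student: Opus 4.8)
The plan is to derive Corollary \ref{cor:lehmer} almost entirely formally from Theorem \ref{thm:detFd}, together with the subgroup monotonicity of Lehmer's constants (Remark \ref{rem:lehmer:subgroup}), the definitions in Section \ref{sub:lehmer:luck}, and the strong Atiyah conjecture for free groups (Remark \ref{rem:atiyah}). First I would address the claim that the four Lehmer's constants of $\F_d$ do not depend on $d$. The key observation is that for any $d \geqslant 2$ the free group $\F_2$ embeds as a subgroup of $\F_d$, and conversely every $\F_d$ embeds as a subgroup of $\F_2$ (a non-abelian free group contains free subgroups of every finite, and even countable, rank); applying Remark \ref{rem:lehmer:subgroup} in both directions gives $\lambda(\F_d) = \lambda(\F_2)$ for each $\lambda \in \{\Lambda, \Lambda_1, \Lambda^w, \Lambda^w_1\}$ and each $d \geqslant 2$. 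The general inequalities $\Lambda \leqslant \Lambda^w \leqslant \Lambda_1^w$ and $\Lambda \leqslant \Lambda_1 \leqslant \Lambda_1^w$ are exactly the ones recorded after the definition of Lehmer's constants, so the only new content in the displayed chain is the equality $\Lambda_1^w(\F_d) = \Lambda_1(\F_d)$ and the upper bound $\Lambda_1(\F_d) \leqslant \tfrac{2}{\sqrt 3}$.

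For the equality $\Lambda_1^w(\F_d) = \Lambda_1(\F_d)$, I would invoke Remark \ref{rem:atiyah}: since $\F_d$ satisfies the strong Atiyah conjecture, right multiplication by any non-zero element of $\C\F_d$ (a fortiori of $\Z\F_d$) is injective. Hence the injectivity condition appearing in the definition of $\Lambda_1^w$ is automatic for every non-zero $A \in R_{\Z\F_d}$, and the zero operator has determinant $1$ so it is excluded from both infima by the constraint $\det_G(A) > 1$. Therefore the two sets over which $\Lambda_1$ and $\Lambda_1^w$ are defined coincide, giving $\Lambda_1^w(\F_d) = \Lambda_1(\F_d)$. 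For the upper bound, take $G = \F_2$ with generators $x, y$ and $A = \Id + R_x + R_y \in R_{\Z\F_2}$; this is an element of $R_{\Z\F_2}$ with integer coefficients, it is injective by the Atiyah property, and by the second assertion of Theorem \ref{thm:detFd} we have $\det_{\F_2}(A) = \tfrac{2}{\sqrt 3} = 1.1547\ldots > 1$. Hence $A$ is an admissible operator in the defining set of $\Lambda_1(\F_2)$, so $\Lambda_1(\F_2) \leqslant \tfrac{2}{\sqrt 3}$, and by the $d$-independence just established $\Lambda_1(\F_d) \leqslant \tfrac{2}{\sqrt 3}$ for all $d \geqslant 2$. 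The strict inequality $\tfrac{2}{\sqrt 3} < \mathcal{M}(L)$ is the numerical comparison $1.1547\ldots < 1.17628\ldots$.

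Finally, for the last sentence of the corollary, let $G$ be a torsion-free group containing a subgroup isomorphic to $\F_d$ for some $d \geqslant 2$ — in particular the fundamental group of any hyperbolic $3$-manifold, which contains a free subgroup of rank $\geqslant 2$ by \cite[C.3, C.26]{AFW}. By Remark \ref{rem:lehmer:subgroup}, $\lambda(G) \leqslant \lambda(\F_d)$ for each of the four Lehmer's constants, and since $G$ is torsion-free the general lower bound $\lambda(G) \geqslant 1$ from Section \ref{sub:lehmer:luck} applies; combining these with $\lambda(\F_d) \leqslant \tfrac{2}{\sqrt 3}$ gives $\lambda(G) \in [1, \tfrac{2}{\sqrt 3}]$ for every $\lambda \in \{\Lambda, \Lambda_1, \Lambda^w, \Lambda^w_1\}$. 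I do not expect any genuine obstacle here: the corollary is a packaging of Theorem \ref{thm:detFd} with standard facts. The only point requiring slight care is justifying that $\F_d$ sits inside $\F_2$ (so that the constants are literally equal and not merely related by one inequality), but this is a classical fact about free groups and can be stated in one line; everything else is immediate from the cited results.
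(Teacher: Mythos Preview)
Your proposal is correct and follows essentially the same approach as the paper: mutual embeddings $\F_2 \hookrightarrow \F_d \hookrightarrow \F_2$ plus Remark~\ref{rem:lehmer:subgroup} for the $d$-independence, the strong Atiyah conjecture for $\Lambda_1 = \Lambda_1^w$, and the operator $\Id + R_x + R_y$ from Theorem~\ref{thm:detFd} for the bound $\tfrac{2}{\sqrt{3}}$. One cosmetic remark: the lower bound $\lambda(G) \geqslant 1$ holds for \emph{all} groups directly from the definitions (the infimum is over determinants $>1$), so you need not invoke torsion-freeness for that step.
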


\begin{rema}
	Note that groups containing nonabelian free groups are known as \textit{full-sized groups}, see for example \cite{BMRS}. The conclusion of Corollary \ref{cor:lehmer} thus applies to all torsionfree full-sized groups.
\end{rema}

\begin{proof}
	The first statement follows from Remark \ref{rem:lehmer:subgroup} and the fact every free group $\F_d$ (for $d\geqslant 2$)	injects in $\F_2$ and vice-versa.
	
	In the second statement, the first two inequalities follow immediately from the definitions, and the first equality from the fact that free groups satisfy the Strong Atiyah Conjecture (see Remark \ref{rem:atiyah} and \cite[Theorem 10.19]{Lu}). The third inequality follows from Theorem \ref{thm:detFd} and the first statement (observe that $d \mapsto \dfrac{(d-1)^{\frac{d-1}{2}}}{d^{\frac{d-2}{2}}}$ is increasing for $d\geqslant 3$, thus $\dfrac{2}{\sqrt{3}}$ is the best upper bound available).
	
	The third statement follows from the second one and Remark \ref{rem:lehmer:subgroup}: for any torsion-free group $G$ with $\F_d < G$, we have $\lambda(G) \leqslant \lambda(\F_d)$ for any $\lambda \in \{ \Lambda, \Lambda_1, \Lambda^w, \Lambda^w_1\}$.
\end{proof}

At the time of writing, $\dfrac{2}{\sqrt{3}} = 1.1547...$ appears to be the best known upper bound for $\Lambda_1^w(G)$ when $G$ is a generic torsion-free group with non-cyclic free subgroups.

There are specific subclasses of this large class of groups where we can go further and establish upper bounds 
of the form $\exp\left (\dfrac{\mathrm{vol}(M)}{6\pi}\right )$ that are even 
smaller than  $\dfrac{2}{\sqrt{3}} = 1.1547...$. We will discuss them in the next section (see Theorem \ref{thm:dehn:filling} and Corollary \ref{cor:dehn:filling}).

For generic torsion-free groups that do not have non-cyclic free subgroups (such as amenable groups), Question \ref{qu:lehmer:luck} (2) appears to be still open and $\mathcal{M}(L) = 1.17628...$ remains the best known upper bound for all Lehmer's constants. We refer to \cite{Lu2} for more detailed discussions on this subject.

\section{Lehmer's constants for groups of small hyperbolic manifolds}\label{sec:hyp}

When $M$ is a hyperbolic $3$-manifold, Lück's argument in \cite[Section 13]{Lu2} for the  case of closed $M$ also applies to the case of cusped $M$, and provides an upper bound on two Lehmer's constants of $\pi_1(M)$, as stated as follows:

\begin{prop}\label{prop:hyp:luck}
	Let $M$ be a hyperbolic $3$-manifold. Then
	$$\Lambda(\pi_1(M)) \leqslant \Lambda^w(\pi_1(M)) \leqslant \exp\left (\dfrac{\mathrm{vol}(M)}{6 \pi}\right ).$$
\end{prop}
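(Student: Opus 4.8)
The plan is to reduce the statement to L\"uck's argument for closed hyperbolic $3$-manifolds in \cite[Section 13]{Lu2}, and then to check that this argument uses nothing specific to the closed case. The first inequality $\Lambda(\pi_1(M)) \leqslant \Lambda^w(\pi_1(M))$ is immediate from the definitions recalled in Section \ref{sub:lehmer:luck}. For the second one, by Theorem \ref{thm:LS} we have $\exp\!\left(\frac{\mathrm{vol}(M)}{6\pi}\right) = T^{(2)}(M)$, so it is enough to prove that $\Lambda^w(\pi_1(M)) \leqslant T^{(2)}(M)$.

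The way I would run L\"uck's argument is as follows. Put $G = \pi_1(M)$ and fix a finite CW-structure on $M$; let $C_*^{(2)}(M)$ be the associated $\NN(G)$-cellular chain complex, whose differentials $\partial_i$ are right multiplication operators by matrices over $\Z G$. Since $T^{(2)}(M) = \exp\!\left(\frac{\mathrm{vol}(M)}{6\pi}\right) \neq 0$ by Theorem \ref{thm:LS}, the very definition of the $L^2$-torsion gives that $C_*^{(2)}(M)$ is weakly acyclic and of determinant class, and that $T^{(2)}(M) = \prod_i \det_G(\partial_i)^{(-1)^i}$. Moreover $G$ is a finitely generated linear group, hence residually finite, hence it satisfies the determinant conjecture: $\det_G(A) \geqslant 1$ for every matrix $A$ over $\Z G$. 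L\"uck's key lemma then uses exactly these inputs --- weak acyclicity, determinant class, $\chi(M) = 0$, and the determinant conjecture --- to assemble from the $\partial_i$ an injective square matrix over $\Z G$ whose Fuglede--Kadison determinant lies in $\left(1, T^{(2)}(M)\right]$, which yields $\Lambda^w(G) \leqslant T^{(2)}(M)$.

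It remains only to observe that nothing in the previous paragraph requires $M$ to be closed: the inputs are that $M$ is a finite aspherical CW-complex with $\chi(M) = 0$, that its $\NN(\pi_1(M))$-cellular chain complex is weakly acyclic and of determinant class with $L^2$-torsion $\exp\!\left(\frac{\mathrm{vol}(M)}{6\pi}\right)$, and that $\pi_1(M)$ satisfies the determinant conjecture. For a hyperbolic $3$-manifold $M$ in the sense of this paper (compact, with empty or toroidal boundary) these all hold: $M$ is homotopy equivalent to the aspherical quotient $\mathbb{H}^3/\pi_1(M)$, hence aspherical with a finite CW-model; $\chi(M) = \tfrac12\chi(\partial M) = 0$ because $\partial M$, if non-empty, is a union of tori; weak acyclicity, determinant class and the value of $T^{(2)}(M)$ come from Theorem \ref{thm:LS}, which is stated for all finite volume (closed or cusped); and $\pi_1(M)$ is again finitely generated and linear, hence residually finite, hence satisfies the determinant conjecture. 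Thus L\"uck's argument applies verbatim and gives the claimed bound.

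The step I expect to be delicate --- but which I would import as a black box from \cite{Lu2} rather than reprove --- is L\"uck's assembly lemma: turning the alternating product $\prod_i \det_G(\partial_i)^{(-1)^i}$, which a priori involves inverses of Fuglede--Kadison determinants, into the determinant of a single honest injective integral matrix; this is precisely where the determinant conjecture is indispensable. The superficially problematic ``closed versus cusped'' distinction is, by contrast, not an obstacle at all, since the only hypothesis of L\"uck's argument that might plausibly fail in the non-closed case, namely $\chi(M) = 0$, holds automatically for hyperbolic $3$-manifolds.
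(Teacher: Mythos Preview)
Your proposal is correct and follows essentially the same route as the paper: the paper's proof is simply the two-line citation ``This follows from \cite[Section 13]{Lu2} and \cite[Theorem 4.9]{Lu}'', i.e.\ exactly the reduction you describe --- import L\"uck's assembly argument from \cite{Lu2} and note that its hypotheses (finite aspherical CW-model, $\chi(M)=0$, weak acyclicity and determinant class of the $\NN(G)$-chain complex, determinant conjecture for $G$) are satisfied in the cusped case as well. Your write-up is more explicit than the paper's about \emph{why} nothing breaks when $\partial M$ is a non-empty union of tori; the one reference you do not surface is \cite[Theorem 4.9]{Lu}, which the paper invokes alongside \cite[Section 13]{Lu2}, but this does not change the strategy.
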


\begin{proof}
	This follows from \cite[Section 13]{Lu2} and \cite[Theorem 4.9]{Lu}.
\end{proof}

We now show that for several hyperbolic $3$-manifolds, notably the ones with smallest volume, we can strenghten Proposition \ref{prop:hyp:luck} by bounding all four Lehmer's constants by the quantity $\exp\left (\dfrac{\mathrm{vol}(M)}{6 \pi}\right )$.

\begin{theo}\label{thm:dehn:filling}
	Let $M$ be a hyperbolic $3$-manifold, either closed or with cusps, and $G=\pi_1(M)$. Assume that $M$ satisfies one of the two following properties:
	\begin{enumerate}
		\item $M$ has cusps and $G$ has a presentation with two generators and one relator,
		\item $M$ is obtained by one or more Dehn fillings on a manifold $N$ satisfying (1).
	\end{enumerate}
	Then there exists an operator $A \in R_{\Z G}$ such that 
	$$\det{}_G(A)=T^{(2)}(M)=\exp\left (\dfrac{\mathrm{vol}(M)}{6 \pi}\right ).$$
	In particular, all four Lehmer's constants of $G$ admit the upper bound $\exp\left (\dfrac{\mathrm{vol}(M)}{6 \pi}\right ).$
\end{theo}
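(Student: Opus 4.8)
The plan is to reduce the computation of the $L^2$-torsion $T^{(2)}(M)$ to a single Fuglede--Kadison determinant by exploiting a small cell structure coming from the presentation of $G$, and then feed in the known value of $T^{(2)}(M)$ from Theorem \ref{thm:LS} (or Theorem \ref{thm:BA:surgery} in the Dehn-filling case).

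\medskip

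\textbf{Case (1): cusped $M$ with a two-generator one-relator presentation.} First I would build an explicit CW-complex $X$ with $\pi_1(X)=G=\langle x,y \mid r\rangle$, namely the presentation complex: one $0$-cell, two $1$-cells, one $2$-cell. Since $M$ is a cusped hyperbolic $3$-manifold, $M$ is aspherical and homotopy equivalent to a $2$-complex; in fact the presentation complex $X$ is homotopy equivalent to $M$ (a standard fact for such manifolds — $M$ deformation retracts onto a spine which, after simple homotopy moves, is the presentation complex; one may invoke the simple-homotopy invariance recalled just before Theorem \ref{thm:LS}). Then $C_*^{(2)}(X,\id)$ is the complex
$$
0 \to \ell^2(G) \overset{\partial_2}{\longrightarrow} \ell^2(G)^{\oplus 2} \overset{\partial_1}{\longrightarrow} \ell^2(G) \to 0,
$$
where $\partial_2$ is right multiplication by the column vector of Fox derivatives $\left(\frac{\partial r}{\partial x}, \frac{\partial r}{\partial y}\right)^{T}$ and $\partial_1$ is right multiplication by $(x-1, y-1)$. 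By definition $T^{(2)}(M) = T^{(2)}(C_*^{(2)}(X,\id)) = \det_G(\partial_1)^{-1}\det_G(\partial_2)$, and since this equals $\exp(\mathrm{vol}(M)/6\pi) \neq 0$ by Theorem \ref{thm:LS}, the complex is weakly acyclic and of determinant class. Now I would use the $2\times 2$ trick / block manipulations of Proposition \ref{prop:detFK:properties} to absorb $\det_G(\partial_1)$: because $x-1$ is injective with $\det_G(\Id - R_x) = 1$ by Proposition \ref{prop:detFK:properties}(5), one can perform an elementary column operation on the matrix presenting $\partial_2$ (over $\Z G$) to replace it by a $1\times 1$ operator whose determinant is exactly $\det_G(\partial_2)/\det_G(\partial_1) = T^{(2)}(M)$; concretely, the Fox-calculus fundamental identity $\frac{\partial r}{\partial x}(x-1) + \frac{\partial r}{\partial y}(y-1) = r - 1 = 0$ in $\Z G$ lets one eliminate one generator and obtain a square operator $A \in R_{\Z G}$ of size $1$ with $\det_G(A) = T^{(2)}(M)$. (This last manipulation is essentially the one the Remark after Theorem \ref{thm:BA:surgery} alludes to.)

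\medskip

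\textbf{Case (2): $M$ obtained from $N$ by Dehn fillings.} Here I would first handle $N$ as in Case (1) to produce $A_N \in R_{\Z \pi_1(N)}$ of size $1$ with $\det_{\pi_1(N)}(A_N) = T^{(2)}(N, \gamma)$ for the relevant $\gamma$. For a single Dehn filling, Theorem \ref{thm:BA:surgery} gives an epimorphism $Q\colon \pi_1(N) \twoheadrightarrow \pi_1(M)$ with $T^{(2)}(N,Q) = T^{(2)}(M) = \exp(\mathrm{vol}(M)/6\pi)$. Applying the functoriality of the construction of Case (1) with target group $G=\pi_1(M)$ — i.e. running the same Fox-calculus reduction but tensoring with $\ell^2(\pi_1(M))$ via $Q$ — yields directly an operator $A = Q(A_N) \in R_{\Z G}$ with $\det_G(A) = T^{(2)}(N,Q) = T^{(2)}(M)$. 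For several successive Dehn fillings one iterates, or simply observes that the composite is again a single Dehn filling on an intermediate manifold; in all cases $M$ itself is either cusped and two-generator-one-relator (back to Case 1) or closed, and the surgery formula applies to the last filling. Finally, from $\det_G(A) = \exp(\mathrm{vol}(M)/6\pi)$ with $A \in R_{\Z G}$ injective (injectivity because the determinant is non-zero), the definitions of the four Lehmer's constants in Section \ref{sub:lehmer:luck} immediately give $\Lambda_1^w(G) \leqslant \det_G(A)$, hence by the inequalities $\Lambda \leqslant \Lambda^w \leqslant \Lambda_1^w$ and $\Lambda \leqslant \Lambda_1 \leqslant \Lambda_1^w$ all four are bounded above by $\exp(\mathrm{vol}(M)/6\pi)$ — provided of course $\det_G(A) > 1$, which holds since $\mathrm{vol}(M) > 0$.

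\medskip

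\textbf{Main obstacle.} The delicate point is the determinant-class/weak-acyclicity bookkeeping in the reduction from the three-term complex to the single operator: one must check that every intermediate operator appearing in the $2\times 2$-trick and the elementary column operations is injective (so that Proposition \ref{prop:detFK:properties}(1),(2),(6) genuinely apply) and of determinant class, rather than just formally manipulating determinants. This is where the free/torsion-free hypotheses and the strong Atiyah conjecture (Remark \ref{rem:atiyah}) — together with the a priori knowledge that $T^{(2)}(M) \neq 0$ from Lück--Schick — do the real work. A secondary technical point is justifying that the presentation complex of a two-generator one-relator cusped hyperbolic $3$-manifold is simple homotopy equivalent to $M$, so that the twisted $L^2$-torsions agree; I would either cite the standard spine argument or restrict attention to the concrete examples (Whitehead link exterior) used in the applications, where this is explicit.
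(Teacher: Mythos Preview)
Your approach matches the paper's. For Case~(1) the paper simply invokes \cite[Theorem~4.9]{Lu}, which packages the Fox-calculus reduction you sketch; your secondary worry about simple-homotopy equivalence of the presentation complex to $M$ is thus delegated to that reference.

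For Case~(2) there is one concrete verification the paper carries out that you only gesture at. The paper fixes a chain $N=N_0,\ldots,N_d=M$ of \emph{hyperbolic} manifolds, each obtained from the previous by a single filling with quotient $Q_k\colon G_k\twoheadrightarrow G_{k+1}$, and at each step checks that the class $c_{k+1}\in G_{k+1}$ of the core of the glued solid torus has infinite order: nontriviality holds because a trivial core would let the drilled knot bound a disk in a $3$-ball, producing a separating $2$-sphere in $N_k$ and contradicting irreducibility of the hyperbolic $N_k$; infinite order then follows since $G_{k+1}$ is torsion-free. This is exactly the hypothesis under which the surgery formula of \cite[Proposition~4.2]{BA2} delivers $T^{(2)}(N_k,Q_k)=\det_{G_{k+1}}\bigl((Q_k\circ\cdots\circ Q_0)(A)\bigr)$ as a single size-$1$ determinant. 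You correctly flag that ``torsion-free hypotheses do the real work'', but you do not isolate the core curve as the element whose order must be controlled, nor supply the irreducibility argument for its nontriviality. Also, your aside that ``the composite is again a single Dehn filling on an intermediate manifold'' is not how the argument goes: the paper genuinely iterates, and uses hyperbolicity of each intermediate $N_k$ both for the surgery formula and for torsion-freeness.
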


\begin{proof}
	If $M$ satisfies (1), then the result follows from  \cite[Theorem 4.9]{Lu}. 
	
	Now let us assume that $M$ satisfies (2), and let 
	$$N=N_0,  \ N_1, \ldots, N_{d-1}, \ N_d=M$$
	be a sequence of hyperbolic compact 3-manifolds with empty or toroidal boundary such that $N=N_0$ satisfies (1) and for $k=0, \ldots, d-1$, $N_{k+1}$ is obtained from $N_k$ by Dehn filling on a boundary component of $N_k$. 
	Let us denote $G_k:=\pi_1(N_k)$ (for $k=0, \ldots, d$) and $Q_k\colon G_k \twoheadrightarrow G_{k+1}$ (for $k=0, \ldots, d-1$) the group epimorphism induced by the Dehn filling process.
	
	Since $N$ satisfies (1), there exists an operator $A \in R_{\Z G_0}$ such that 
	$$\det{}_{G_0}(A)=T^{(2)}(N).$$
	
	Let us denote $c_1 \in G_1=\pi_1(N_1)$ the homotopy class of the core of the solid torus that was glued to $N_0$ by the Dehn filling process; one can also see $c_1$ as the class of a knot $K_1$ we can drill out from $N_1$ to obtain $N_0$.
	
	We remark that $c_1$ is non trivial. Indeed, if $c_1$ was trivial,  then one could choose the knot $K_1$ to be a small unknot lying in a $3$-ball; by drilling out $K_1$, the boundary of this $3$-ball would then be a separating sphere in $N_0$, which would contradict the fact that $N_0$ is hyperbolic (and thus irreducible).
	
	We now remark that $c_1$ has infinite order. This follows from the fact that $c_1$ is non-trivial, and that $G_1$ is torsion-free (because $N_1$ is hyperbolic, thus idrreducible with infinite fundamental group, see for instance \cite{AFW}).
	
	Finally, from the Dehn surgery formula for $L^2$-torsions (see Theorem \ref{thm:BA:surgery} or \cite[Proposition 4.2]{BA2}), since the core class $c_1$ has infinite order, we have
	$$T^{(2)}(N_1)=T^{(2)}(N_0,Q_0)=\det{}_{G_1}(Q_0(A)).$$
	
	The previous argument applies for any $k$, thus by  immediate induction, we have
	$$T^{(2)}(N_d)=T^{(2)}(N_{d-1},Q_{d-1})=
	\ldots = T^{(2)}(N_{0},Q_{d-1}\circ Q_{d-2} \circ \ldots \circ Q_0)=
	\det{}_{G_d}(A'),$$
	where $A'= \left (Q_{d-1}\circ Q_{d-2} \circ \ldots \circ Q_0\right )(A) \in R_{\Z G_d}$.
	
	The conclusion follows from the Lück--Schick theorem (see Theorem \ref{thm:LS}).
\end{proof}

\begin{coro}\label{cor:dehn:filling}
	Let $M$ be a hyperbolic $3$-manifold obtained by Dehn fillings on the exterior of the Whitehead link. Then
	$$\Lambda^w_1(\pi_1(M)) \leqslant 
	\exp\left (\dfrac{\mathrm{vol}(M)}{6 \pi}\right ).$$
	In particular, this applies to the Weeks manifold $M_{We}$.
\end{coro}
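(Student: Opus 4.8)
The plan is to deduce the corollary directly from Theorem~\ref{thm:dehn:filling}: the only point to check is that the exterior $N_W\subset S^3$ of the Whitehead link satisfies hypothesis~(1) of that theorem. Since the Whitehead link has two components, $N_W$ is a compact connected orientable $3$-manifold whose boundary consists of two tori and whose interior is the complete finite-volume hyperbolic Whitehead link complement; hence $N_W$ is a hyperbolic $3$-manifold with cusps in the sense of the excerpt, and what remains is to exhibit a presentation of $\pi_1(N_W)$ with two generators and a single relator.

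For this I would use two-bridge theory: the Whitehead link is the two-bridge link $\mathfrak{b}(8,3)$ (the link $5^2_1$ in Rolfsen's table), and by Schubert's normal form the group of a two-bridge link is generated by the meridians $a,b$ of its two bridges subject to a single relator $r(a,b)$ (of the shape $aw=wb$ for an explicit word $w$ in $a^{\pm1}$ and $b^{\pm1}$). Alternatively one may start from the five-generator Wirtinger presentation coming from the standard $5$-crossing diagram and eliminate three generators by Tietze transformations. Either way $\pi_1(N_W)=\langle a,b\mid r\rangle$, which is consistent with the associated presentation $2$-complex having Euler characteristic $1-2+1=0=\chi(N_W)$. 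Thus $N_W$ satisfies hypothesis~(1) of Theorem~\ref{thm:dehn:filling}.

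Now let $M$ be any hyperbolic $3$-manifold obtained from $N_W$ by Dehn fillings. If no cusp is filled, $M=N_W$ already satisfies~(1); if one or both cusps are filled, $M$ satisfies hypothesis~(2) of Theorem~\ref{thm:dehn:filling} with $N=N_W$. In either case Theorem~\ref{thm:dehn:filling} applies and gives $\Lambda^w_1(\pi_1(M))\leqslant\exp\left(\frac{\mathrm{vol}(M)}{6\pi}\right)$ — in fact it produces an explicit injective operator $A\in R_{\Z\pi_1(M)}$ with $\det_{\pi_1(M)}(A)=T^{(2)}(M)=\exp\left(\frac{\mathrm{vol}(M)}{6\pi}\right)$, a value that is $>1$ because $\mathrm{vol}(M)>0$. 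For the Weeks manifold $M_{We}$, Example~\ref{ex:weeks} says $M_{We}$ is obtained by Dehn fillings on both boundary components of $N_W$, so this yields $\Lambda^w_1(\pi_1(M_{We}))\leqslant\exp\left(\frac{\mathrm{vol}(M_{We})}{6\pi}\right)=1.05128\ldots$; combined with the bounds on $\Lambda$ and $\Lambda^w$ from Example~\ref{ex:weeks}, all four Lehmer's constants of $\pi_1(M_{We})$ are then $<\mathcal{M}(L)$.

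I expect two mild obstacles. The first is to pin down the two-generator one-relator presentation of $\pi_1(N_W)$ precisely enough to invoke hypothesis~(1); this is routine via two-bridge theory or Tietze moves, but it must be written out or cited carefully. The second is that the proof of Theorem~\ref{thm:dehn:filling} passes through a chain of \emph{hyperbolic} manifolds, so when two cusps of $N_W$ are filled one must know that the intermediate once-filled manifold can be chosen hyperbolic; this holds for all but finitely many slopes by Thurston's hyperbolic Dehn surgery theorem (applied to one cusp at a time), and can be checked by hand in the remaining finitely many cases, in particular for the slope pairs producing $M_{We}$.
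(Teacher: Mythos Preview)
Your approach is correct and essentially identical to the paper's: the paper also deduces the corollary from Theorem~\ref{thm:dehn:filling} by observing that $\pi_1(N_W)$ has a two-generator one-relator presentation, giving explicitly $\langle a,b \mid [a,[a,b][a,b^{-1}]]\rangle$ obtained by simplifying a Wirtinger presentation (so your Tietze-moves alternative is precisely what the paper does, and your two-bridge argument would serve equally well). Your second obstacle --- the need for the intermediate once-filled manifold to be hyperbolic when both cusps are filled --- is a genuine technical point that the paper's brief proof does not address explicitly.
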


\begin{proof}
	This follows from Theorem \ref{thm:dehn:filling} and the fact that the group of the Whitehead link admits for instance the presentation (by simplifying a Wirtinger presentation)
	$$\langle a,b| [a,[a,b][a,b^{-1}]] \rangle,$$
	where we denote $[g,h]:=g h g^{-1} h^{-1}$ the commutator of $g,h$, and $a, b$ are the classes of meridians of each of the two components of the Whitehead link.
\end{proof}

There is a finite number of cusped hyperbolic 3-manifolds with volume less than $6 \pi \ln\left (\frac{2}{\sqrt{3}}\right )=2.711...$ (see for instance \cite{GMM}).
For each such cusped manifold $M$, the group $\pi_1(M)$ admits a presentation with two generators and one relator (this can be checked for each case on the software \textit{Snappy} \cite{CDGW}), thus it follows from Theorem \ref{thm:dehn:filling} that
$$\Lambda^w_1(\pi_1(M)) \leqslant 
\exp\left (\dfrac{\mathrm{vol}(M)}{6 \pi}\right ) <
\frac{2}{\sqrt{3}}=1.1547...$$
for each such $M$.

The same condition holds for the infinite number of closed hyperbolic 3-manifolds with volume less than $6 \pi \ln\left (\frac{2}{\sqrt{3}}\right )=2.711...$ and satisfying the conditions of Theorem \ref{thm:dehn:filling} or Corollary \ref{cor:dehn:filling} (the one with smallest volume being the Weeks manifold). 

For fundamental groups of hyperbolic 3-manifolds not covered by the previous two paragraphs, the upper bound
$$\Lambda^w_1(\pi_1(M)) \leqslant 
\Lambda^w_1(\F_2) \leqslant
\frac{2}{\sqrt{3}}=1.1547...$$
(which follows from Corollary  \ref{cor:lehmer}) remains the best one at the moment.

\section{Approximate computations of Fuglede-Kadison determinants}\label{sec:approx}

This section builds upon \cite[Section 3.7]{Lu} and \cite[Appendix B]{BA}.

Let $G$ be a finitely presented group and let $A \in R_{\C G}$ denote an injective right multiplication operator on $\ell^2(G)$ by a non-zero element of the group algebra. In general, trying to compute the exact value of $\det_G(A)$ with techniques such as Proposition \ref{prop:det_traces} may prove too difficult, either the step of computing the generating series, or the value of the integral, or both. 

However, in several cases, an \textit{approximate} value of $\det_G(A)$ can still yield useful information. For instance, if $A \in R_{\Z G}$, then an upper bound $D>1$ on $\det_G(A)$ will provide the same upper bound $D$ for Lehmer's constant $\Lambda_1^w(G)$. This motivates us to mention a general method of obtaining a sequence of upper approximations for $\det_G(A)$, once again via combinatorial group theory.

\begin{prop}\label{prop:upper}
	Let $G$ be a finitely presented group and let $A \in R_{\C G}$ denote an injective right multiplication operator on $\ell^2(G)$ by a non-zero element of the group algebra. Then for any fixed $\lambda \in \left (0,\|A\|^{-2}\right )$, the sequence 
	$$	\sqrt{\dfrac{1}{\lambda}\exp  \left (-\sum_{n=1}^{N} \frac{1}{n}\mathrm{tr}_{G}\left (
		\left (
		\Id - \lambda A^* A
		\right )^n
		\right )\right )}
	$$
	decreases towards $\det_G(A)$ as $N \to \infty$.
\end{prop}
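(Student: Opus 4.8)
The plan is to pass to the spectral measure of $A^*A$ and then invoke the monotone convergence theorem. First I would introduce the Borel probability measure $\mu$ on $[0,\|A\|^2]$, the spectral measure of the positive operator $A^*A$ relative to the von Neumann trace, characterised by $\mathrm{tr}_G(f(A^*A))=\int_{[0,\|A\|^2]}f(s)\,d\mu(s)$ for bounded Borel $f$. Since $A$ is injective, so is $A^*A$, whence the spectral projection of $A^*A$ onto $\{0\}$ vanishes, i.e. $\mu(\{0\})=0$. Fixing $\lambda\in(0,\|A\|^{-2})$ and setting $c:=\lambda\|A\|^2\in(0,1)$, I would rewrite, for every $N$,
$$\sum_{n=1}^{N}\frac1n\,\mathrm{tr}_G\!\left((\Id-\lambda A^*A)^n\right)=\int_{[0,\|A\|^2]}S_N(1-\lambda s)\,d\mu(s),\qquad S_N(x):=\sum_{n=1}^{N}\frac{x^n}{n},$$
the interchange of the finite sum with the integral being harmless.

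Next I would prove monotonicity. For $s$ in the support of $\mu$ one has $1-\lambda s\in[1-c,1]$, so $S_N(1-\lambda s)\ge 0$ and $S_{N+1}(1-\lambda s)-S_N(1-\lambda s)=\frac{(1-\lambda s)^{N+1}}{N+1}\ge 0$; hence $N\mapsto\int S_N(1-\lambda s)\,d\mu(s)$ is non-decreasing, the inner quantity $-\sum_{n=1}^N\frac1n\mathrm{tr}_G((\Id-\lambda A^*A)^n)$ is non-increasing in $N$, and composing with the monotone maps $\exp$, multiplication by $\lambda^{-1}>0$, and $\sqrt{\cdot}$ shows that the displayed sequence is non-increasing (indeed strictly decreasing, since the increments above are strictly positive on $\operatorname{supp}\mu$).

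Then I would identify the limit. Since $S_N(x)\uparrow -\ln(1-x)$ for $x\in[0,1)$, taking $x=1-\lambda s$ gives $S_N(1-\lambda s)\uparrow -\ln(\lambda s)$ for every $s\in(0,\|A\|^2]$; as $\mu(\{0\})=0$, monotone convergence yields
$$\sum_{n=1}^{N}\frac1n\,\mathrm{tr}_G\!\left((\Id-\lambda A^*A)^n\right)\ \xrightarrow[N\to\infty]{}\ -\ln\lambda-\int_{(0,\|A\|^2]}\ln(s)\,d\mu(s)\ \in\ (-\infty,+\infty],$$
so $\frac1\lambda\exp\!\big(-\sum_{n=1}^N\cdots\big)\to\exp\!\left(\int_{(0,\|A\|^2]}\ln(s)\,d\mu(s)\right)$, read as $0$ when the integral is $-\infty$. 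To recognise this as $\det_G(A)^2$ I would combine Proposition \ref{prop:detFK:properties}(8) and (4): $\det_G(A)=\lim_{\varepsilon\to0^+}\sqrt{\det_G(A^*A+\varepsilon\Id)}=\lim_{\varepsilon\to0^+}\exp\!\left(\tfrac12\int_{[0,\|A\|^2]}\ln(s+\varepsilon)\,d\mu(s)\right)$, and one further monotone convergence argument (the functions $\ln(s+\varepsilon)$ decrease to $\ln s$ and are bounded above) gives $\int\ln(s+\varepsilon)\,d\mu(s)\to\int_{(0,\|A\|^2]}\ln(s)\,d\mu(s)$ as $\varepsilon\to0^+$, so $\det_G(A)^2=\exp\!\left(\int_{(0,\|A\|^2]}\ln(s)\,d\mu(s)\right)$; taking square roots finishes the argument.

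The only real subtlety I anticipate is that $A$ is merely injective, not invertible, so $\Id-\lambda A^*A$ may have operator norm exactly $1$ and the logarithm series need not converge in operator norm; working throughout with the spectral measure — which has no atom at $0$ precisely because $A$ is injective — is what legitimises the monotone convergence steps and simultaneously covers the non-determinant-class case, in which both the sequence and $\det_G(A)$ equal $0$.
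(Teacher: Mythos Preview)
Your proof is correct and takes a genuinely different route from the paper. The paper's argument recycles Proposition~\ref{prop:det_traces}: it already knows that
\[
\det{}_G(A)=\lim_{\varepsilon\to 0^+}\sqrt{\tfrac{1}{\lambda}\exp\Bigl(-\sum_{n\geqslant 1}\tfrac{1}{n}\mathrm{tr}_G\bigl(((1-\lambda\varepsilon)\Id-\lambda A^*A)^n\bigr)\Bigr)},
\]
then uses operator monotonicity of $\mathrm{tr}_G$ to compare the partial sums at different values of $\varepsilon$, and finally interchanges the limits $N\to\infty$ and $\varepsilon\to 0^+$ (legitimate because the doubly indexed family is monotone in each variable). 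You instead bypass the $\varepsilon$-regularisation entirely by passing to the spectral measure $\mu$ of $A^*A$, reducing everything to monotone convergence for the scalar functions $S_N(1-\lambda s)\uparrow -\ln(\lambda s)$ on $(0,\|A\|^2]$; the injectivity hypothesis enters precisely as $\mu(\{0\})=0$. Your approach is more self-contained (it does not lean on the earlier functional-calculus computation or on Abel's theorem), makes the monotonicity transparent at the level of integrands, and handles the non-determinant-class case $\int\ln s\,d\mu=-\infty$ explicitly, whereas the paper's argument leaves that edge case implicit. The paper's approach, on the other hand, stays within the operator-algebraic language already set up and avoids introducing spectral measures.
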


\begin{proof}
	From the proof of Proposition \ref{prop:det_traces},  for any fixed $\lambda \in \left (0,\|A\|^{-2}\right )$, we have	
	$$\det{}_{G}(A) = \lim_{\varepsilon \to 0^+}
	\sqrt{\dfrac{1}{\lambda}\exp  \left (-\sum_{n=1}^{\infty} \frac{1}{n}\mathrm{tr}_{G}\left (
		\left (
		(1-\lambda \varepsilon) \Id - \lambda A^* A
		\right )^n
		\right )\right )}.$$
	Since $A^*A$ is positive and $\mathrm{tr}_G$ is monotone, we have for all $n\geqslant 1$ and $\varepsilon<\varepsilon'$ that
	$$0 \leqslant \mathrm{tr}_{G}\left (
	\left (
	(1-\lambda \varepsilon') \Id - \lambda A^* A
	\right )^n
	\right ) \leqslant
	\mathrm{tr}_{G}\left (
	\left (
	(1-\lambda \varepsilon) \Id - \lambda A^* A
	\right )^n
	\right ).$$
	Thus, by taking partial sums from $1$ to any fixed $N$, we have:
	\begin{align*}
	\det{}_G(A) 
	&\leqslant 	\sqrt{\dfrac{1}{\lambda}\exp  \left (-\sum_{n=1}^{N} \frac{1}{n}\mathrm{tr}_{G}\left (
		\left (
		(1-\lambda \varepsilon) \Id - \lambda A^* A
		\right )^n
		\right )\right )}\\
	&\leqslant 	\sqrt{\dfrac{1}{\lambda}\exp  \left (-\sum_{n=1}^{N} \frac{1}{n}\mathrm{tr}_{G}\left (
		\left (
		(1-\lambda \varepsilon') \Id - \lambda A^* A
		\right )^n
		\right )\right )},
	\end{align*}
	where the first inequality becomes an equality as $N\to \infty$ and $\epsilon \to 0^+$.
	
	From what precedes, we get the following upper bound
	\begin{align*}
	\det{}_G(A) 
	&\leqslant 	\sqrt{\dfrac{1}{\lambda}\exp  \left (-\sum_{n=1}^{N'} \frac{1}{n}\mathrm{tr}_{G}\left (
		\left (
		\Id - \lambda A^* A
		\right )^n
		\right )\right )}\\
	&\leqslant 	\sqrt{\dfrac{1}{\lambda}\exp  \left (-\sum_{n=1}^{N} \frac{1}{n}\mathrm{tr}_{G}\left (
		\left (
		\Id - \lambda A^* A
		\right )^n
		\right )\right )},
	\end{align*}
	for all $N' > N \geqslant 1$. The result follows.
\end{proof}

We can now use Proposition \ref{prop:upper} to obtain upper approximations of $\det_G(A)$. For this, we also need a \textit{method for solving the word problem in $G$}. Once we have such a method, with enough time we can compute  any finite number of terms
$\mathrm{tr}_{G}\left (
\left (
\Id - \lambda A^* A
\right )^n
\right )$ and thus a partial sum up to $N$.

We illustrate such a process with the example of the fundamental group $G=\pi_1(m004)$ of the figure-eight knot complement $m004$. Remark that $m004$ is one of the two cusped hyperbolic manifolds with smallest volume (equal to $\mathrm{vol}(m004)=2.0298...$), and it follows from Section \ref{sec:hyp}  that 
$$\Lambda^w_1(G) \leqslant T^{(2)}(m004) = \det{}_{G}(A)=
\exp\left (\dfrac{\mathrm{vol}(m004)}{6 \pi}\right ) =1.113...$$
for several possible operators $A\in R_{\Z G}$.

A convenient way to solve the word problem in a linear group $H$ is to use a linear \textit{faithful} representation $\rho\colon H \hookrightarrow GL_2(\C)$ and to compare a matrix $\rho(g)$ (for $g\in H$) to the matrix $\begin{pmatrix}
1 & 0 \\ 0 & 1
\end{pmatrix}$. When $M$ is a hyperbolic 3-manifold and $H=\pi_1(M)$, there exists such a faithful representation $\rho$, which is built from the complete hyperbolic structure on $M$; the computer program \textit{Snappy} \cite{CDGW} computes the value of this representation $\rho$ on the generators of $H$. We  use this method for $H=G=\pi_1(m004)$ for two different operators.

From a diagram of the figure-eight knot, we can obtain a Wirtinger presentation of $G$, which reduces to the presentation
$$P = \langle x,y | x y x^{-1} y x = y x y^{-1} x y \rangle. $$
Remark that the abelianization goes from $G$ to $\Z$ and sends $x,y$ to $1$. We consider
the operator 
$$A_t := Id - t R_y - t R_{x y x^{-1}} - t R_{y x y^{-1}} + t^2 R_{x y x^{-1} y},$$
defined for all $t>0$. As a function of $t>0$, the Fuglede--Kadison determinant of $A_t$ gives the $L^2$-Alexander invariant of the figure-eight knot $4_1$:
$$\Delta^{(2)}_{4_1}(t) = \det{}_{\NN(G_{K})}(A_t).$$
When $t=1$, we have $\det{}_{G}(A_1)=T^{(2)}(m004) =
\exp\left (\frac{\mathrm{vol}(m004)}{6 \pi}\right ) =1.113...$

Remark that since the figure-eight knot is fibered, $G$ is a semi-direct product of $\F_2$ and $\Z$ and thus satisfies the strong Atiyah conjecture.

We compute upper approximations of $\Delta^{(2)}_{4_1}(t)$ from the operator $A_t$. For each $t>0$, we choose $\lambda^{-1} = 1 + 3 t + t^2$ on the interval $I = [0.001;4]$. The approximations up to $N=6$ are drawn in blue in Figure \ref{figure sage 41}. The known exact values of $\Delta^{(2)}_{4_1}(t)$ (which are $1$ on $(0;0.38)$, $1.113...$ in $1$ and $t^2$ on $(2.618;4)$, see \cite{BA}) are drawn in red.

\begin{figure}[!h]
	\centering
	\includegraphics[scale=0.65]{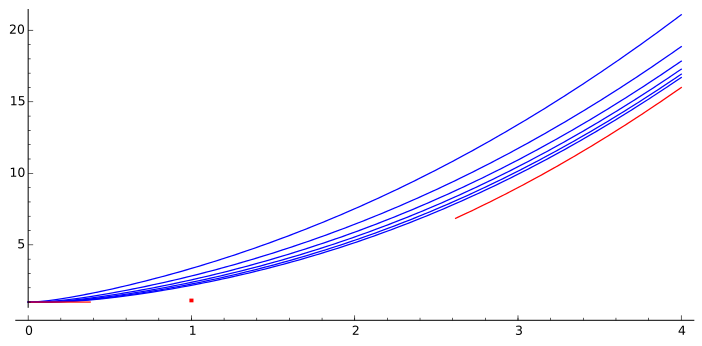}
	\caption{Upper approximation for $\Delta_{4_1}^{(2)}(t)$ from the Wirtinger presentation.} \label{figure sage 41}
\end{figure}

Since the figure-eight knot complement is obtained from the one of the Whitehead link via a Dehn filling of slope $-1$, the  group $G$ admits the presentation
$$ P'= \langle a_1, \alpha, \beta |  [a_1, \alpha ] [a_1^{-1}, \alpha] = \beta, \alpha \beta^{-1} = 1 \rangle.$$
Here the abelianization to $\Z$ acts as:
$a_1 \mapsto 1, \ \alpha, \beta \mapsto 0.$
We consider the operator 
$$A'_t = Id - R_{a_1 \alpha a_1^{-1}} - \dfrac{1}{t}R_{[a_1,\alpha]a_1^{-1}} + \dfrac{1}{t}R_{[a_1,\alpha]a_1^{-1}\alpha},$$
and the $L^2$-Alexander invariant is here equal to
$$\Delta^{(2)}_{4_1}(t)= \det{}_{\NN(G_K)}(tA'_t) \cdot \max(1,t).$$
Once again, for $t=1$, we have $\det{}_{G}(A'_1)=T^{(2)}(m004) =
\exp\left (\frac{\mathrm{vol}(m004)}{6 \pi}\right ) =1.113...$

We compute upper approximations of $\Delta^{(2)}_{4_1}(t)$ from approximations $f_N(t)$ of $\det_{\NN(G_K)}(t A'_t)$. We choose $\lambda^{-1} = 2 + 2 t$ and the interval $I = [0.001;4]$. The approximations
$\left (f_N(t) \cdot \max(1,t)\right )$
up to $N=7$ are drawn in blue in Figure \ref{figure sage 41 twist}. 
The known exact values of $\Delta^{(2)}_{4_1}(t)$ (which are $1$ on $(0;0.38)$, $1.113$ in $1$ and $t^2$ on $(2.618;4)$) are drawn in red.

\begin{figure}[!h]
	\centering
	\includegraphics[scale=0.65]{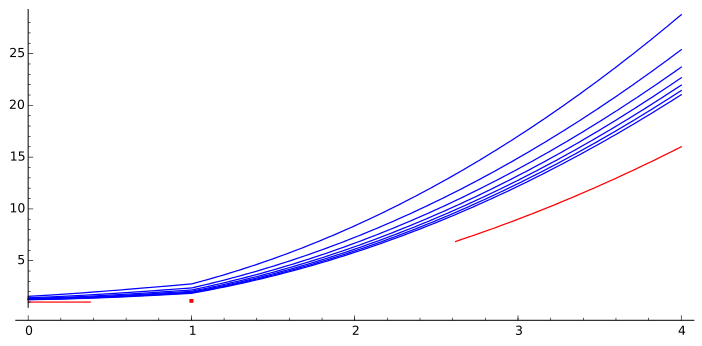}
	\caption{Upper approximation for $\Delta_{4_1}^{(2)}(t)$ from the twist knot presentation.} \label{figure sage 41 twist}
\end{figure}

The forms of the upper approximations in
Figures \ref{figure sage 41} and \ref{figure sage 41 twist} led us to conjecture  that the $L^2$-Alexander invariant was always a convex function \cite[Conjecture 6.1]{BA}. Liu later proved in \cite{Liu} that this invariant is always a \textit{multiplicatively convex} function,  which implies in particular that it is indeed convex and continuous.
This illustrates the potential of computing such upper approximations.

\section*{Acknowledgements}
The author was supported by the FNRS in his "Research Fellow" position at UCLouvain, under under Grant
no. 1B03320F. We thank Cristina Anghel-Palmer, Anthony Conway, Jacques Darné and Wolfgang Lück for helpful discussions,  \'Elie de Panafieu for his help in writing the SAGE algorithm used in Section \ref{sec:approx}, and the anonymous referees for helpful comments and suggestions.

\end{document}